\begin{document}

\title*{Ternary associator, ternary commutator and ternary Lie algebra at cube roots of unity}
% Use \titlerunning{Short Title} for an abbreviated version of
% your contribution title if the original one is too long
\author{Viktor Abramov\orcidID{0000-0001-7174-8030} %%%%%    and\\ Name of Second Author\orcidID{1111-2222-3333-4444}
}
\authorrunning{Ternary Lie algebra}
% Use \authorrunning{Short Title} for an abbreviated version of
% your contribution title if the original one is too long
\institute{Viktor Abramov \at University of Tartu, Narva mnt 18, 51009 Tartu, Estonia  \email{viktor.abramov@ut.ee}
%         \and Name of Second Author \at Name, Address of Institute \email{name@email.address}
}
%
% Use the package "url.sty" to avoid
% problems with special characters
% used in your e-mail or web address
%
\maketitle

\abstract*{We extend the concepts of the associator and commutator from algebras with a binary multiplication law to algebras with a ternary multiplication law using cube roots of unity. By analogy with the Jacobi identity for the binary commutator, we derive an identity for the proposed ternary commutator. While the Jacobi identity is based on the cyclic permutation group of three elements $\mathbb Z_3$, the identity we establish for the ternary commutator is based on the general affine group $GA(1,5)$. We introduce the notion of a ternary Lie algebra at cube roots of unity. A broad class of such algebras is constructed using associative ternary multiplications of rectangular and cubic matrices. Furthermore, a complete classification of non-isomorphic two-dimensional ternary Lie algebras at cube roots of unity is obtained.}
%Each chapter should be preceded by an abstract (no more than 200 words) that summarizes the content. 
%The abstract will appear \textit{online} at 
%\url{www.SpringerLink.com} and be available with unrestricted access. This allows unregistered users to read the abstract as a teaser for the complete chapter.
%Please use the 'starred' version of the \texttt{abstract} command for typesetting the text of the online abstracts 
%(cf. source file of this chapter template \texttt{abstract}) and include them with the source files of your manuscript. 
%Use the plain \texttt{abstract} command if the abstract is also to a%ppear in the printed version of the book.
\abstract{We extend the concepts of the associator and commutator from algebras with a binary multiplication law to algebras with a ternary multiplication law using cube roots of unity. By analogy with the Jacobi identity for the binary commutator, we derive an identity for the proposed ternary commutator. While the Jacobi identity is based on the cyclic permutation group of three elements $\mathbb Z_3$, the identity we establish for the ternary commutator is based on the general affine group $GA(1,5)$. We introduce the notion of a ternary Lie algebra at cube roots of unity. A broad class of such algebras is constructed using associative ternary multiplications of rectangular and cubic matrices. Furthermore, a complete classification of non-isomorphic two-dimensional ternary Lie algebras at cube roots of unity is obtained.}
%%%%%%%%%%%%%%%%%%%%%%
%%%%%%%%%%%%%%%%%%%%%%
\section{Introduction}
%####### TEXT of INTRODUCTION
In this chapter, we consider the question of extending the concept of Lie algebras from algebras with a binary multiplication law to algebras with a ternary multiplication law. It is evident that approaches to such an extension of the Lie algebra concept may vary. The approach proposed by Filippov \cite{Filippov(1985)} is based on the observation that the Jacobi identity can be written in the form of a Leibniz formula, which shows that in the double Lie bracket, the outer bracket acts as a derivation of the inner one. This form of the Jacobi identity can be easily generalized to algebraic structures with ternary, or even more generally, $n$-ary multiplication laws. As a result, we obtain a structure currently known as an $n$-Lie algebra. It should be noted that a Lie algebra is a special case of an $n$-Lie algebra when $n=2$.

The concept of a Lie algebra is closely connected to the notion of a Poisson algebra. Recall that a Poisson algebra is a set with two algebraic structures, where the first is a commutative and associative algebra, and the second is a Lie algebra, with these two structures being compatible. Specifically, the Lie bracket acts as a derivation of commutative and associative multiplication. Therefore, it is not surprising that alongside the development of the theory of $n$-Lie algebras, Nambu proposed a generalization of Hamiltonian mechanics \cite{Nambu(1973)} based on an $n$-ary Poisson bracket satisfying all the requirements of the definition of an $n$-Lie algebra. In the late 1990s and the first decade of the 2000s, $n$-Lie algebras gained significant popularity as a research topic due to proposals to use them in the theory of M-branes \cite{Awata-Minich,Bagger-Lambert(2007),Bagger-Lambert(2008),Takhtajan}.

In this chapter, we introduce a novel approach to generalizing the concept of Lie algebras to algebraic structures equipped with a ternary multiplication law. This construction differs fundamentally from the Filippov–Nambu approach discussed earlier. Our methodology is motivated by and closely related to the ideas and frameworks emerging in the study of noncommutative geometry. Recall that within the framework of noncommutative geometry, various algebraic structures have been introduced and investigated, in which a parameter $q$, a primitive $N$-th root of unity, plays a central role. In particular, the notion of a graded $q$-differential algebra has been extensively introduced and studied in this context \cite{Abramov-Kerner,Dubois-Violette-Kerner,{Dubois-Violette-0}}. In such algebras, the differential $d$ satisfies the property $d^N=0$. When $N=2$, a graded $q$-differential algebra reduces to the well-known concept of a graded differential algebra.

The research presented in this chapter can be viewed as a continuation of the ideas and methods described in \cite{Abramov-Kerner-LeRoy}. The motivation for our proposed notion of a ternary Lie algebra at cube roots of unity stems from a ternary generalization of the Pauli exclusion principle and quantum nature of Lorentz invariance \cite{Abramov-Liivapuu,Kerner(2008),Kerner(2017),Kerner(2019)}. This generalization is based on the properties of the quark model.

Across our approach to a ternary extension of Lie algebras we employ the well known properties of cube roots of unity
\begin{equation}
1+\omega+\bar\omega=0,\;\;\omega^2=\bar\omega,\;\;\omega^3=\bar\omega^3=1,
\end{equation}
where $\bar\omega$ is the complex conjugate of a primitive cube root of unity $\omega$. 
Our approach is rooted in a key aspect of Lie algebra theory: the intrinsic connection between Lie algebras and associative algebras. Specifically, it is well known that any associative algebra can be transformed into a Lie algebra by defining a Lie bracket with the help of the commutator. Crucially, the Jacobi identity for this commutator is satisfied as a direct consequence of the associativity of the original algebra’s multiplication law. This foundational result plays a pivotal role in Lie algebra theory, as it facilitates the construction of a broad class of matrix Lie algebras. Matrix Lie algebras are of profound importance not only within the theoretical framework of Lie algebra theory but also in their numerous applications, particularly in mechanics, theoretical physics and differential geometry \cite{Helgason}.

Let us now examine in more detail the elementary observation mentioned earlier, namely that the Jacobi identity for the commutator holds due to the associativity of the multiplication law in an associative algebra. Assume that $x,y,z$ are elements of an algebra, which is not necessarily associative. If we expand the brackets of the commutator in the expression  (the left-hand side of the Jacobi identity)
\begin{equation}
[[x,y],z]+[[y,z],x]+[[z,x],y],
\label{intr: Jacobi identity}
\end{equation}
we obtain the expression consisting of 12 terms. Each term corresponds to a permutation of the three factors in the product $x\cdot y\cdot z$ (where $\cdot$ denotes the multiplication in our algebra) and appears twice: once in the form $(x\cdot y)\cdot z$ and once in the form $x\cdot (y\cdot z)$, with opposite signs. For example, the resulting expression will contain the terms 
\begin{equation}
(x\cdot y)\cdot z+(-1)\,x\cdot (y\cdot z)
\label{intr: associator}
\end{equation}
that is, the associator of three elements $x,y,z$. Thus, the entire expression will be a linear combination of six associators. If we now assume that an algebra is associative, all six associators will vanish, and we will obtain the Jacobi identity. Thus, we conclude that the Jacobi identity for the commutator 
\begin{equation}
[x,y]=x\cdot y +(-1)\,y\cdot x
\label{intr: binary commutator}
\end{equation} 
holds due to two key factors: first, the associativity of a multiplication, and second, the presence of the minus sign multiplying the second term on the right-hand side of the commutator.

These elementary considerations have been presented to clarify the analogy that will guide our approach to algebras with a ternary multiplication law. Let $\cal A$ be a vector space over the field of complex numbers $\mathbb C$, equipped with a trilinear mapping $(a,b,c)\in{\cal A}\times{\cal A}\times{\cal A}\to a\cdot b\cdot c\in{\cal A}$, which we refer to as ternary multiplication. In what follows a vector space $\cal A$ endowed with a ternary multiplication will be referred to as a ternary algebra. To begin, recall that in the case of ternary multiplication, associativity can be expressed in one of two forms \cite{Carlsson}:
\begin{equation}
(a \cdot b \cdot c) \cdot g \cdot h = a \cdot (b \cdot c \cdot g) \cdot h = a \cdot b \cdot (c \cdot g \cdot h),
\label{intr: associativity I}
\end{equation}
or
\begin{equation}
(a \cdot b \cdot c) \cdot g \cdot h = a \cdot (g \cdot c \cdot b) \cdot h = a \cdot b \cdot (c \cdot g \cdot h).
\label{intr: associativity II}
\end{equation}
where $a,b,c,g,h\in {\cal A}$. Associativity of the form \eqref{intr: associativity I} is referred to as associativity of the first kind, while associativity of the form \eqref{intr: associativity II} is called associativity of the second kind. In the case where the kind of associativity is not important, that is, when ternary multiplication is associative either of the first kind or the second, we will simply refer to it as a ternary associative algebra. It is worth noting that, in the context of ternary associativity, there is not yet a universally established terminology. The ternary associativity conditions \eqref{intr: associativity I} and \eqref{intr: associativity II} appear in various articles under different names. We will not enumerate these alternative terms, which can be found in \cite{Abramov,Zapata-Arsiwalla-Beynon(2024)}.

Now, we turn to the question of a ternary analog of the commutator \eqref{intr: binary commutator} and a ternary analog of the Jacobi identity. We aim to construct these two structures in the context of associative ternary multiplication. In our approach, the structure of the associator \eqref{intr: associator} plays a central role, as it unifies the concepts of the commutator, associativity and Jacobi identity into a cohesive framework. Formula \eqref{intr: associator}, in a figurative sense, serves as one of the "building blocks" through which we derive the Jacobi identity in the context of associative multiplication.

Thus, to generalize the concept of Lie algebras to structures with ternary multiplication laws, it is necessary to extend the notion of the associator \eqref{intr: associator} to ternary or, more generally, $n$-ary multiplication laws. To achieve this, we propose leveraging the primitive $n$-th roots of unity ($n\geq 2$) and their fundamental property
$$
1+q+q^2+\ldots+q^{n-1}=0,
$$
where $q$ is a primitive $n$-th root of unity. In the binary multiplication case, the primitive second-order root of unity is $-1$, and the associator has the form \eqref{intr: associator}.
In the case of ternary multiplication, according to the analogy mentioned above, we must use a primitive third-order root of unity, $\omega$. Since in this case there are two notions of associativity, we obtain two ternary associators. 
\begin{definition}
Let $\cal A$ be a ternary algebra whose ternary multiplication is denoted by $a\cdot b\cdot c$, where $a,b,c\in{\cal A}$. A linear combination of three ternary products
$$
(a \cdot b \cdot c) \cdot g \cdot h,\;\;\;
       a \cdot (b \cdot c \cdot g) \cdot h,\;\;
            a \cdot b \cdot (c \cdot g \cdot h).
$$
with coefficients $1,\omega,\bar\omega$ will be referred to as a ternary $\omega$-associator of the first kind. By this, we mean that the third-order roots of unity $1,\omega,\bar\omega$  can appear in any order within the linear combination, but all three must be used. Analogously a linear combination of three ternary products
$$
(a \cdot b \cdot c) \cdot g \cdot h,\;\;\;
       a \cdot (g \cdot c \cdot b) \cdot h,\;\;
            a \cdot b \cdot (c \cdot g \cdot h).
$$
with coefficients $1,\omega,\bar\omega$ will be referred to as a ternary $\omega$-associator of the second kind.
\end{definition}
It is evident that if a ternary multiplication satisfies the property of associativity of the first (second) kind, then the ternary $\omega$-associator of the first (second) kind vanishes identically due to the property $1+\omega+\bar\omega=0$. 

Note that in the structure we are developing, the primitive third-order roots of unity $\omega,\bar\omega$ are entirely equivalent. This leads to the fact that the structure of ternary $\omega$-associator exhibits a reflection-type symmetry $\omega\, \leftrightarrow\, \bar\omega$, meaning that replacing one root $\omega$ with another $\bar\omega$ does not fundamentally alter the defined structure. Thus, if we have a ternary $\omega$-associator of the first or second type, we can define a new ternary $\omega$-associator by replacing the primitive cube root of unity $\omega$ with the primitive cubic root of unity $\bar\omega$, and vice versa. It is worth noting that the sum of a ternary $\omega$-associator and its reflected counterpart results in a linear combination of ternary products with integer coefficients $-1$ and $+1$. Indeed, if the original ternary $\omega$-associator of the first kind has the form
$$
(a \cdot b \cdot c) \cdot g \cdot h + \bar\omega \; a \cdot (b \cdot c \cdot g) \cdot h + {\omega} \; a \cdot b \cdot (c \cdot g \cdot h),
$$
then its reflected version takes the form
$$
(a \cdot b \cdot c) \cdot g \cdot h + \omega \; a \cdot (b \cdot c \cdot g) \cdot h + \bar{\omega} \; a \cdot b \cdot (c \cdot g \cdot h).
$$
Adding these two linear combinations, we obtain
$$
(a \cdot b \cdot c) \cdot g \cdot h - a \cdot (b \cdot c \cdot g) \cdot h 
     +(a \cdot b \cdot c) \cdot g \cdot h- a \cdot b \cdot (c \cdot g \cdot h).
$$
It is worth to note that the first two terms in the obtained expression, so to speak, measure the partial associativity of a ternary multiplication when shifting round brackets from the leftmost position to the center, while the next two terms measure the partial associativity of a ternary multiplication when shifting the round brackets from the leftmost position to the rightmost.
%%%%%%%%%%%%%%%%%%%%%%%%%%%%%%
%%%%%%%%%%%%%%%%%%%%%%%%%%%%%%%
%%%%%%%%%%%%%%%%%%%%%%%%%%%%%%%
\section{Ternary $\omega$-commutator}
%%%%%%%%%%%%%%%%%%%%%%%%%%%%%%%
%%%%%%%%%%%%%%%%%%%%%%%%%%%%%%%
The next step in the structure we are developing is to find a possible ternary analogue of the commutator, in which cube roots of unity would be used, and a ternary analogue of the Jacobi identity. It is natural to assume that the left-hand side of a ternary analogue of the Jacobi identity is given by a sum of double brackets of a ternary analogue of the commutator. Moreover, the following property should hold: expanding the double brackets of a ternary analogue of the commutator at the left-hand side of an identity yields an expression that is a linear combination of ternary $\omega$-associators. Consequently, in the case of ternary associativity, all ternary $\omega$-associators vanish, leading to the identity that we shall consider as the ternary analogue of the Jacobi identity.

We begin with a ternary analogue of the commutator. It should be noted immediately that the approach proposed in this paper differs from the existing approach, which is based on the skew-symmetry of the commutator. That is, an $n$-ary commutator or the alternating sum \cite{Bremner-Peresi(2006)} is skew-symmetric, that is, it remains unchanged under an even permutation of its arguments and changes sign under an odd permutation. For example, in the definition of an 
$n$-Lie algebra, it is required that an 
$n$-ary commutator be skew-symmetric and satisfy the Filippov–Jacobi identity. If $\cal A$ is a ternary algebra with a ternary product $a\cdot b\cdot c$, then one can consider the alternating sum (or ternary commutator) 
\begin{eqnarray}
&& a\cdot b\cdot c+b\cdot c\cdot a+c\cdot a\cdot b-c\cdot b\cdot a-b\cdot a\cdot c-a\cdot c\cdot b\label{intr: alternated ternary commutator}\\
&& = \underline{a}\cdot\underline{b}\cdot c 
       + \underline{b}\cdot c\cdot\underline{a} 
           +c\cdot\underline{a}\cdot\underline{b}
                                        \label{intr: form 2 of alternated ternary commutator}
\end{eqnarray}
where 
\begin{equation}
\underline{a}\cdot\underline{b}\cdot c=a\cdot b\cdot c-b\cdot a\cdot c,\;\;
    \underline{b}\cdot c\cdot\underline{a}=b\cdot c\cdot a-a\cdot c\cdot b,\;\;
      c\cdot\underline{a}\cdot\underline{b}=c\cdot a\cdot b-c\cdot b\cdot a.
\end{equation}
The expression \eqref{intr: alternated ternary commutator} is usually considered as a ternary analog of the commutator \cite{Bremner-Peresi(2006),Nambu(1973),Takhtajan}. The question of extending the concept of Lie algebras to algebras with an \(n\)-ary multiplication law via the alternating sum \eqref{intr: alternated ternary commutator} is studied in \cite{Bremner-Peresi(2006)}, where the minimal identities for the alternating sum \eqref{intr: alternated ternary commutator} are found in the case of (totally or partially) associative ternary products. It is worth noting that in \cite{Bremner-Peresi(2006)}, the ternary associativity of the second kind is not considered. The reason why the expression \eqref{intr: alternated ternary commutator} is considered as a ternary analogue of the commutator lies in its representation in the form \eqref{intr: form 2 of alternated ternary commutator}, where its connection to the binary commutator becomes apparent. In other words, it can be interpreted as a measure of the non-commutativity of a ternary multiplication with respect to pairs of its arguments.

It is evident that the approach to ternary analog of the commutator via the alternating sum \eqref{intr: alternated ternary commutator} is not consistent with our notion of ternary $\omega$-associator. In order for a ternary commutator, we are looking for, to be consistent with a ternary $\omega$-associator, we must use the cube roots of unity $1,\omega,\bar\omega$ as coefficients in an expression for ternary commutator. Since the cube roots of unity can be considered as the representation of the group of cyclic permutations of three elements, we define the ternary analogue of the commutator as a linear combination with coefficients 1, \(\omega\), \(\bar\omega\) of all permutations of the elements \(a\), \(b\), and \(c\) in the ternary product \(a \cdot b \cdot c\), where each cyclic permutation is multiplied by either \(\omega\) or \(\bar\omega\).
\begin{definition}
Let $\cal A$ be a ternary algebra with ternary product $a\cdot b\cdot c$. The ternary $\omega$-commutator is defined by the following formula
\begin{equation}
[a,b,c]=a\cdot b\cdot c+\omega\;b\cdot c\cdot a+\bar\omega\;c\cdot a\cdot b+
                   c\cdot b\cdot a+\bar\omega\;b\cdot a\cdot c+\omega\;a\cdot c\cdot b. 
\label{intr: ternary commutator}
\end{equation}
\end{definition}
Let us indicate the most important properties of the ternary $\omega$-commutator. As noted above, the structures we are developing exhibit reflection symmetry $\omega\leftrightarrow\bar\omega$. Thus, we can define a ternary $\omega$-commutator that will be a mirror image of commutator \eqref{intr: ternary commutator}, where by mirror image we mean the replacement $\omega\, \leftrightarrow\, \bar\omega$  in ternary $\omega$-commutator \eqref{intr: ternary commutator}. We shall refer to this ternary $\omega$-commutator as the reflected ternary $\omega$-commutator \eqref{intr: ternary commutator} and denote it as follows
\begin{equation}
[a,b,c]_{\omega\leftrightarrow\bar\omega}=a\cdot b\cdot c+\bar\omega\;b\cdot c\cdot a+\omega\;c\cdot a\cdot b+
                   c\cdot b\cdot a+\omega\;b\cdot a\cdot c+\bar\omega\;a\cdot c\cdot b. 
\label{intr: ternary commutator reflected}
\end{equation}
It is easy to verify that the permutation of the elements $a$ and $c$ in ternary $\omega$-commutator \eqref{intr: ternary commutator} is equivalent to transitioning from $\omega$-commutator \eqref{intr: ternary commutator} to its reflected counterpart \eqref{intr: ternary commutator reflected}. That is, we have the property
\begin{equation}
[c,b,a]=[a,b,c]_{\omega\leftrightarrow\bar\omega}.
\end{equation}
It is also easy to verify that a cyclic permutation of the arguments
\begin{center}
    \begin{tikzcd}
    & c \arrow[dl, shorten <=5pt, shorten >=5pt] \arrow[dl, shorten <=5pt, shorten >=5pt] & \\
    a \arrow[rr, shorten <=5pt, shorten >=5pt] & & b \arrow[ul, shorten <=5pt, shorten >=5pt]
\end{tikzcd}
\end{center}
in the ternary $\omega$-commutator \eqref{intr: ternary commutator} and its reflected counterpart \eqref{intr: ternary commutator reflected} results in the appearance of the coefficients $\omega, \bar\omega$, respectively. That is, we have the following properties of the ternary $\omega$-commutator \eqref{intr: ternary commutator} and its reflected \eqref{intr: ternary commutator reflected} with respect to cyclic permutations of their arguments
\begin{eqnarray}
&& [a,b,c] = \omega\;[b,c,a],\label{omega-symmetry of commutator}\\
&& [a,b,c]_{\omega\leftrightarrow\bar\omega} = \bar\omega\;[b,c,a]_{\omega\leftrightarrow\bar\omega}.\label{omega-symmetry of reflected commutator}
\end{eqnarray}
Using the aforementioned symmetries of the ternary $\omega$-commutator with respect to permutations of its arguments, one can obtain important properties of the ternary $\omega$-commutator. First of all, it is easy to see that when all three arguments of the ternary $\omega$-commutator are equal, that is, $a=b=c$, the $\omega$-commutator identically vanishes $[a,a,a]=0$. However, when two of the arguments are equal, the ternary $\omega$-commutator does not vanish identically.
Indeed, if $a=c$, we obtain
\begin{eqnarray}
[a,b,a] &=& (a\cdot b\cdot a-b\cdot a\cdot a)+(a\cdot b\cdot a-a\cdot a\cdot b)\label{intr: noncommutativity 1}\\
         &=& \underline{a}\cdot\underline{b}\cdot a + a\cdot\underline{b}\cdot\underline{a}\label{intr: noncommutativity 0}\\
         &=& 2\;a\cdot b\cdot a-b\cdot a\cdot a-a\cdot a\cdot b.\label{intr: noncommutativity 2}
\end{eqnarray}
This formula is of interest in the sense that it indicates a connection between the ternary $\omega$-commutator \eqref{intr: ternary commutator} and the alternating sum \eqref{intr: alternated ternary commutator}. This becomes evident when comparing formulas \eqref{intr: form 2 of alternated ternary commutator} and \eqref{intr: noncommutativity 0}. As we have already noted, a pair of underlined symbols means that one has to apply alternation to this pair and this can be regarded as a "measurement" of the deviation from commutativity with respect to this pair of variables. Moreover, formula \eqref{intr: noncommutativity 2} shows that when two arguments in the ternary $\omega$-commutator \eqref{intr: ternary commutator} are equal, the linear combination of ternary products on the right-hand side of the ternary $\omega$-commutator has integer coefficients +1 and -1, bringing it closer to the commutator \eqref{intr: binary commutator} and the alternating sum \eqref{intr: alternated ternary commutator}. Figuratively speaking, one could say that the ternary $\omega$-commutator \eqref{intr: ternary commutator} makes a "jump" from integer coefficients in the case of binary commutator to the complex plane only when all three of its arguments are different.

An important characteristic of an algebra with binary multiplication is the existence of a unit element. This question plays an important role also in the case of ternary algebra. An analogue of a unit element in the case of a ternary algebra is the so-called biunit. Recall that an element $e$ of a ternary algebra $\cal A$ is called a right (left) biunit if, for any $a\in\cal A$ it satisfies the condition $a\cdot e\cdot e=a$ ($e\cdot e\cdot a=a$). An element $e$ is called a biunit if it is a right and a left biunit, and, additionally, it satisfies the condition $e\cdot a\cdot e=a$. Recall that in the case of a binary unital algebra (i.e., an algebra with binary multiplication and a unit element), the commutator of any element of this algebra with the unit element is zero. The ternary $\omega$-commutator \eqref{intr: ternary commutator} we propose has an analogous property, which means that if $e$ is a biunit of a ternary algebra $\cal A$, then for any element $a\in\cal A$ it holds $[e,a,e]=2\,e\cdot a\cdot e-e\cdot e\cdot a-a\cdot e\cdot e=0$. 
%%%%%%%%%%%%%%%%%%%%%%%
%%%%%%%%%%%%%%%%%%%%%%%
%%%%%%%%%%%%%%%%%%%%%%%
%%%%%%%%%%%%%%%%%%%%%%%
\section{General affine group of degree 1 and identity}
%%%%%%%%%%%%%%%%%%%%%%
%%%%%%%%%%%%%%%%%%%%%%
In the previous section, we defined the ternary $\omega$-commutator \eqref{intr: ternary commutator} and outlined its properties. An important question concerns an identity it satisfies. In this section, we will prove that if a ternary algebra is associative (of the first or second kind), then the ternary $\omega$-commutator \eqref{intr: ternary commutator} satisfies an identity based on the general affine group $GA(1,5)$. The right-hand side of this identity is zero, while the left-hand side is a sum of twenty double ternary $\omega$-commutators. To verify this identity, we apply formula \eqref{intr: ternary commutator} twice to each of the twenty double ternary $\omega$-commutators. As a result, all the terms obtained can be grouped into ternary $\omega$-associators either of the first or second kind, depending on the type of associativity. In sum, we obtain zero due to the associativity of ternary multiplication. Thus, we observe a complete analogy with the binary commutator, the associativity of binary algebra, and the Jacobi identity described in Introduction. This provides a basis for considering the obtained identity as a ternary analogue of the Jacobi identity.

The general affine group of degree one $GA(1,5)$ is the group of affine transformations over the field $\mathbb F_5$, that is, the group of transformations of the form $x\to a\,x+b$, where the operation of multiplication is the composition of transformations and $a\in \mathbb \mathbb F_5^\times=F_5\backslash\{0\}, b\in \mathbb F_5$. This group is non-abelian and its order is 20. The general affine group $GA(1,5)$ is the semidirect product of the multiplicative group $\mathbb F_5^\times$ and the additive group $\mathbb F_5$, that is, $GA(1,5)=\mathbb F_5^\times\rtimes \mathbb F_5$ with the multiplication defined by $(a_1,b_1)\cdot(a_2,b_2)=(a_1\,a_2, b_1+a_1\,b_2)$.

The general affine group of degree one $GA(1,5)$ is also a subgroup of the symmetric group on five elements $S_5$. In this paper, we will use this representation. The general affine group is generated by two cycles $\sigma=(1\; 2\; 3\; 4\; 5),\;\;\tau=(2\;4\;5\;3)$.
Then, we have the following:
$$
GA(1,5)=<\sigma,\tau\;|\; \sigma^5=e, \tau^4=e, \tau\,\sigma\,\tau^{-1}=\sigma^2>,
$$
where $e$ is the identity element of the group $GA(1,5).$ It follows that the general affine group of degree one $GA(1,5)$ contains two cyclic subgroups, $N$ and $H$ generated by $\sigma$ and $\tau$, respectively. Thus $N=\{e,\sigma,\sigma^2,\sigma^3,\sigma^4\}$ and $H=\{e,\tau,\tau^2,\tau^3\}$. The subgroup $N$ is a normal divisor of the general affine group of degree one. Notably, the general affine group of degree one is the semidirect product of the subgroups $N$ and $H$.

Consider the general affine group of degree one $GA(1,5)$ as a subgroup of the symmetric group $S_5$. Let $a_1,a_2,a_3,a_4,a_5$ be elements of a ternary algebra $\cal A$. Form the double ternary $\omega$-commutator
\begin{equation}
(a_1,a_2,a_3,a_4,a_5)\to\big[[a_1,a_2,a_3],a_4,a_5\big].
\label{sec1: double ternary commutator}
\end{equation}
Define the action of a permutation $\rho\in GA(1,5)$ by the following formula
$$
\rho\cdot\big[[a_{i_1},a_{i_2},a_{i_3}],a_{i_4},a_{i_5}\big] = \big[[a_{i_{\rho(1)}},a_{i_{\rho(2)}},a_{i_{\rho(3)}}],a_{i_{\rho(4)}},a_{i_{\rho(5)}}\big],
$$
where $(i_1,i_2,i_3,i_4,i_5)$ is a permutation of integers $(1,2,3,4,5)$.
Thus, each permutation $\rho$ of the general affine group $GA(1,5)$ induces the permutation of the elements in the double ternary $\omega$-commutator \eqref{sec1: double ternary commutator}. Let us define the sum of double ternary $\omega$-commutators
\begin{equation}
\Omega\,(a_1,a_2,a_3,a_4,a_5)=\sum_{\rho\in GA(1,5)}\;\rho\cdot\big[[a_1,a_2,a_3],a_4,a_5\big].
\label{sec1: Omega polynomial}
\end{equation}
It is evident that there are twenty double ternary $\omega$-commutators in \eqref{sec1: Omega polynomial}, where each is a double ternary $\omega$-commutator of the form 
$\big[[a_{\rho(1)},a_{\rho(2)},a_{\rho(3)}],a_{\rho(4)},a_{\rho(5)}\big]$, where $\rho$ is a permutation from the general affine group $GA(1,5)$.
To provide a more precise description of the structure of $\Omega\,(a_1,a_2,a_3,a_4,a_5)$ , we list all elements of the general affine group $GA(1,5)$, arranging them in the following table
\begin{eqnarray}
&& e,\; \sigma,\; \sigma^2,\;\sigma^3,\;\sigma^4,\label{first 5}\\
&& \tau,\;\sigma\tau,\;\sigma^2\tau,\;\sigma^3\tau,\;\sigma^4\tau,\label{second 5}\\
&& \tau^2,\;\sigma\tau^2,\;\sigma^2\tau^2,\;\sigma^3\tau^2,\;\sigma^4\tau^2,\label{third 5}\\
&& \tau^3,\;\sigma\tau^3,\;\sigma^2\tau^3,\;\sigma^3\tau^3,\;\sigma^4\tau^3.\label{fourth 5}
\end{eqnarray}
The permutations in the first row of the table correspond to cyclic permutations of the elements in the double ternary $\omega$-commutator \eqref{sec1: double ternary commutator}. The first permutation in the second row produces the double ternary commutator $\big[[a_1,a_4,a_2],a_5,a_3\big]$, and the subsequent permutations in this row perform cyclic permutations of the elements in this double $\omega$-commutator. The following rows exhibit a similar structure. Thus, by introducing the notation for cyclic permutations using the formula
$$
\mathlarger{\circlearrowleft}\big[[a_{i_1},a_{i_2},a_{i_3}],a_{i_4},a_{i_5}\big]=\sum_{k=0}^4\;\sigma^k\cdot \big[[a_{i_1},a_{i_2},a_{i_3}],a_{i_4},a_{i_5}\big],
$$
we can express $\Omega\,(a_1,a_2,a_3,a_4,a_5)$  in the form
\begin{eqnarray}
\Omega(a_1,a_2,a_3,a_4,a_5)=&&\!\!\!\mathlarger{\circlearrowleft}\Big(\big[[a_1,a_2,a_3],a_4,a_5\big] +\big[[a_1,a_4,a_2],a_5,a_3\big]\nonumber\\
      &&\;\; +\big[[a_1,a_5,a_4],a_3,a_2\big]+\big[[a_1,a_3,a_5],a_2,a_4\big]\Big).
\label{sec1: structure of Omega}
\end{eqnarray}
In formula \eqref{sec1: structure of Omega}, the symbol for cyclic permutations is applied to each of the four terms enclosed in round brackets.
\begin{theorem}
Let $\cal A$ be an associative ternary algebra. Then for any five elements $a_1,a_2,a_3,a_4,a_5\in\cal A$ we have $\Omega(a_1,a_2,a_3,a_4,a_5)=0$. 
\label{theorem: identity}
\end{theorem}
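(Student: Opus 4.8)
The plan is to expand the twenty double ternary $\omega$-commutators in $\Omega(a_1,a_2,a_3,a_4,a_5)$ using Definition \eqref{intr: ternary commutator} twice and to show that the resulting linear combination of ternary products (with coefficients among $1,\omega,\bar\omega$) organizes itself into a sum of ternary $\omega$-associators, which then vanish by associativity of the first (resp.\ second) kind because $1+\omega+\bar\omega=0$. I would first treat the associativity-of-the-first-kind case; the second-kind case follows by an entirely parallel bookkeeping once one keeps track of the swap $b\leftrightarrow c$ appearing in \eqref{intr: associativity II}.

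First I would set up the combinatorial skeleton. Applying the inner $\omega$-commutator to $[a_{i},a_{j},a_{k}]$ produces six ternary products of $a_i,a_j,a_k$; inserting each into the outer slots $[\,\cdot\,,a_l,a_m]$ and expanding again produces $6\times 6=36$ fully parenthesized ternary-of-ternary products per double commutator, i.e.\ $720$ terms in all of $\Omega$ — one for each of the $5!$ orderings of $a_1,\dots,a_5$ appearing, in principle, with various bracketings and coefficients. The key structural observation I would exploit is the symmetry \eqref{omega-symmetry of commutator}: because $[a,b,c]=\omega[b,c,a]$, the inner bracket is only "defined up to a cube root of unity" along $\mathbb Z_3$-orbits, and likewise the whole double commutator transforms by a cube root of unity under the order-$5$ cyclic generator $\sigma$ once one also tracks how $\sigma$ permutes the outer pair. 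Writing $\Omega$ in the reduced form \eqref{sec1: structure of Omega} — a single $\sigma$-orbit applied to four "seed" double commutators — cuts the work by a factor of five: it suffices to expand the four seeds $[[a_1,a_2,a_3],a_4,a_5]$, $[[a_1,a_4,a_2],a_5,a_3]$, $[[a_1,a_5,a_4],a_3,a_2]$, $[[a_1,a_3,a_5],a_2,a_4]$, symmetrize over $\sigma$, and collect.

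The heart of the argument is then the collection step: I would show that, after symmetrizing, every monomial ordering $a_{\pi(1)}\cdot a_{\pi(2)}\cdot a_{\pi(3)}\cdot a_{\pi(4)}\cdot a_{\pi(5)}$ (for $\pi\in S_5$) that survives does so only inside a block of exactly three terms of the shape
\[
(x\cdot y\cdot z)\cdot g\cdot h,\qquad x\cdot (y\cdot z\cdot g)\cdot h,\qquad x\cdot y\cdot (z\cdot g\cdot h),
\]
carrying coefficients that are the three cube roots $1,\omega,\bar\omega$ in some order — i.e.\ a ternary $\omega$-associator of the first kind, which is zero by the remark following Definition 1. Concretely, the three bracketings in an associator come precisely from whether the \emph{outer} bracket of the double commutator lands to the left of, straddling, or to the right of the inner triple; and the three coefficients $1,\omega,\bar\omega$ get matched up by combining the inner-bracket coefficient from \eqref{intr: ternary commutator} with the outer-bracket coefficient, using $\omega\bar\omega=1$, $\omega^2=\bar\omega$ and $1+\omega+\bar\omega=0$. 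The role of the group $GA(1,5)$ is exactly that its orbit structure on the five slots is what makes these triples close up: the normal cyclic subgroup $N=\langle\sigma\rangle$ supplies the outer-to-left / outer-to-right shifts needed for \eqref{intr: associativity I}, while the complement $H=\langle\tau\rangle$ with $\tau\sigma\tau^{-1}=\sigma^2$ permutes among the four seed double commutators so that no monomial is left unpaired.

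The main obstacle I anticipate is precisely this matching — verifying that the coefficient attached to each of the three bracketings of a given monomial is genuinely $\{1,\omega,\bar\omega\}$ and not, say, $\{1,1,\omega\}$ or $\{2,-1,\bar\omega\}$ — because a single monomial ordering can be produced by several different seeds and several different choices inside the two expansions of \eqref{intr: ternary commutator}, so the coefficients add up from multiple contributions before forming the clean triple. I would control this by fixing a canonical representative for each $S_5$-monomial and tabulating its contributions from the four seeds together with the $\sigma$-shift that produces it, reducing the whole check to a finite (if tedious) verification; the identities $1+\omega+\bar\omega=0$, $\omega\bar\omega=1$, $\bar\omega=\omega^2$ are the only algebraic facts needed to close it. Once the first-kind case is done, the second-kind case is obtained by replacing each first-kind associator by the corresponding second-kind associator from Definition 1 — the bracket-shift pattern is the same, only the internal order $b\cdot c$ versus $g\cdot c\cdot b$ changes — and again everything cancels because $1+\omega+\bar\omega=0$.
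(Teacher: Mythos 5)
Your outline follows essentially the same route as the paper's proof: expand the twenty double commutators via \eqref{intr: ternary commutator}, exploit the $\sigma$-orbit form \eqref{sec1: structure of Omega} and the $\omega$-symmetry of the bracket, and group the resulting parenthesized monomials into ternary $\omega$-associators that vanish by $1+\omega+\bar\omega=0$. Two remarks. First, the decisive step --- verifying that the coefficients actually assemble into associators --- is precisely what you defer to ``a finite (if tedious) verification,'' and it is the entire content of the paper's proof: there the double commutator $\big[[a_{i_1},a_{i_2},a_{i_3}],a_{i_4},a_{i_5}\big]$ is paired with $\big[[a_{i_3},a_{i_2},a_{i_1}],a_{i_5},a_{i_4}\big]$ and with the cyclic shifts of both, and the six relevant coefficients are read off from the symmetry properties of the $\omega$-commutator. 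Second, and more substantively, your claim that each surviving monomial sits in a block of three bracketings ``carrying coefficients that are the three cube roots $1,\omega,\bar\omega$ in some order'' is not what happens at the level of \emph{total} coefficients: each bracketed monomial receives contributions from exactly two of the twenty double commutators (as your own count $720=2\times 360$ already suggests), so the three totals come out as $1+\omega$, $2\bar\omega$, $\omega+1$ in the paper's second-kind normalization --- a sum of \emph{two} $\omega$-associators, one per table in the paper, not a single clean triple. If you ran your tabulation expecting each bracketing to carry a single cube root, the check would fail; it succeeds once you keep the two contributing sources separate, or equivalently allow linear combinations of associators. With that adjustment your plan coincides with the paper's argument.
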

\begin{proof}
The theorem states that the vanishing of the polynomial \eqref{sec1: structure of Omega} is independent of the kind of ternary associativity. In the proof, we assume that the ternary multiplication is associative of the second kind. The case of associativity of the first kind is treated similarly, with minor modifications.

First, note that the inner ternary $\omega$-commutator in each double ternary $\omega$-commutator of \eqref{sec1: structure of Omega} contains three different elements of the set 
$$
{\cal S}=\{a_1,a_2,a_3,a_4,a_5\}.
$$ 
If we disregard the order of these three elements, we obtain 10 such triples, that is, $C^5_3=10$. However, in \eqref{sec1: structure of Omega}, there are twenty double ternary $\omega$-commutators. Consequently, all double ternary $\omega$-commutators can be grouped into pairs, where each pair consists of two double ternary $\omega$-commutators containing in the inner ternary $\omega$-commutator the same triple of elements (but arranged differently) from the set $\cal S$. Let 
$$
{\mathfrak t}=\big[[a_{i_1},a_{i_2},a_{i_3}],a_{i_4},a_{i_5}\big]
$$ 
be a double ternary $\omega$ -commutator from the sum \eqref{sec1: structure of Omega}. By listing all the double ternary $\omega$-commutators of \eqref{sec1: structure of Omega}, it is easy to see that the double ternary $\omega$-commutator 
$$
\tilde{\mathfrak t}=\big[[a_{i_3},a_{i_2},a_{i_1}],a_{i_5},a_{i_4}\big]
$$ 
also belongs to \eqref{sec1: structure of Omega}. These two double ternary $\omega$-commutators ${\mathfrak t},\tilde{\mathfrak t}$ contain the product $(a_{i_1}\cdot a_{i_2}\cdot a_{i_3})\cdot a_{i_4}\cdot a_{i_5}$ with the coefficients $1$ and $\omega$, respectively. Parentheses indicate the order of ternary multiplications, and the coefficients are easily determined from the symmetry properties of the ternary $\omega$-commutator.  

Now, any cyclic permutation of arguments in the double ternary $\omega$-commutator $\mathfrak t$ gives the double ternary $\omega$-commutator that also belongs to \eqref{sec1: structure of Omega}. Consequently, the double ternary $\omega$-commutators
$$
{\mathfrak s}=\big[[a_{i_2},a_{i_3},a_{i_4}],a_{i_5},a_{i_1}\big],\;\;
         \tilde{\mathfrak s}=\big[[a_{i_4},a_{i_3},a_{i_2}],a_{i_1},a_{i_5}\big],
$$
are the terms of the sum \eqref{sec1: structure of Omega}. It easy to find that the double ternary $\omega$-commutators ${\mathfrak s},\tilde{\mathfrak s}$ contain the product $a_{i_1}\cdot (a_{i_4}\cdot a_{i_3}\cdot a_{i_2})\cdot a_{i_5}$ with the coefficient $\bar\omega$.  

Finally, performing the cyclic permutation in the double ternary $\omega$-commutator ${\mathfrak s}$, we obtain two more double ternary commutators 
$$
{\mathfrak v}=\big[[a_{i_3},a_{i_4},a_{i_5}],a_{i_1},a_{i_2}\big],\;\;\;  
   \tilde{\mathfrak v}=\big[[a_{i_5},a_{i_4},a_{i_3}],a_{i_2},a_{i_1}\big],
$$ 
which belong to \eqref{sec1: structure of Omega}. Repeating the previous reasoning, we conclude that double ternary $\omega$-commutators ${\mathfrak v},\tilde{\mathfrak v}$ contain the ternary product $a_{i_1}\cdot a_{i_2}\cdot (a_{i_3}\cdot a_{i_4}\cdot a_{i_5})$ with coefficients $\omega$ and $1$, respectively. The obtained results can be represented in the form of following tables, where the first row consists of double ternary $\omega$-commutators, and the second row below them shows the ternary products they contain:
\vskip.3cm
\begin{tabular}{|c|c|c|}
\hline
  \rule{0pt}{2.5ex} % Увеличивает высоту строки
  % after \\: \hline or \cline{col1-col2} \cline{col3-col4} ...
  $\big[[a_{i_1},a_{i_2},a_{i_3}],a_{i_4},a_{i_5}\big]$ & $\big[[a_{i_2},a_{i_3},a_{i_4}],a_{i_5},a_{i_1}\big]$ & $\big[[a_{i_3},a_{i_4},a_{i_5}],a_{i_1},a_{i_2}\big]$\\
  \rule{0pt}{2.8ex} % Увеличивает высоту строки
  $(a_{i_1}\cdot a_{i_2}\cdot a_{i_3})\cdot a_{i_4}\cdot a_{i_5}$ & $\bar\omega\;a_{i_1}\cdot (a_{i_4}\cdot a_{i_3}\cdot a_{i_2})\cdot a_{i_5}$ & $\omega\;a_{i_1}\cdot a_{i_2}\cdot (a_{i_3})\cdot a_{i_4}\cdot a_{i_5})$ \\
  \hline
\end{tabular}
\vskip.3cm
\noindent
and
\vskip.3cm
\begin{tabular}{|c|c|c|}
\hline
  \rule{0pt}{2.5ex} % Увеличивает высоту строки
  % after \\: \hline or \cline{col1-col2} \cline{col3-col4} ...
  $\big[[a_{i_3},a_{i_2},a_{i_1}],a_{i_5},a_{i_4}\big]$ & $\big[[a_{i_4},a_{i_3},a_{i_2}],a_{i_1},a_{i_5}\big]$ & $\big[[a_{i_5},a_{i_4},a_{i_3}],a_{i_2},a_{i_1}\big]$\\
  \rule{0pt}{2.5ex} % Увеличивает высоту строки
  $\omega\;(a_{i_1}\cdot a_{i_2}\cdot a_{i_3})\cdot a_{i_4}\cdot a_{i_5}$ & $\bar\omega\;a_{i_1}\cdot (a_{i_4}\cdot a_{i_3}\cdot a_{i_2})\cdot a_{i_5}$ & $a_{i_1}\cdot a_{i_2}\cdot (a_{i_3}\cdot a_{i_4}\cdot a_{i_5})$ \\
  \hline
\end{tabular}
\vskip.3cm
\noindent
It is easy to see that the given tables list all double ternary $\omega$-commutators from the sum \eqref{sec1: structure of Omega} that contain one of the three ternary products
\begin{equation}
(a_{i_1}\cdot a_{i_2}\cdot a_{i_3})\cdot a_{i_4}\cdot a_{i_5},\;\;
                        a_{i_1}\cdot (a_{i_4}\cdot a_{i_3}\cdot a_{i_2})\cdot a_{i_5},\;\;
                              a_{i_1}\cdot a_{i_2}\cdot (a_{i_3}\cdot a_{i_4}\cdot a_{i_5}).
\label{sec1: double ternary products}
\end{equation}
\noindent
We observe that the sum of the ternary products appearing in the second row of the first table forms the ternary $\omega$-associator. Similarly, the sum of the ternary products in the second row of the second table also constitutes the ternary $\omega$-associator. Due to ternary associativity of the second kind, these ternary $\omega$-associators vanish.  

Thus, we have proved that all ternary products of the form \eqref{sec1: double ternary products}, where the permutation $(i_1,i_2,i_3,i_4,i_5)$ belongs to the general affine group, can be combined into ternary $\omega$-associators and, by ternary associativity, vanish. Note that the number of such permutations is 20, which coincides with the number of elements in the general affine group.  

Now, assume that we apply the same cyclic permutation of three elements in each inner ternary $\omega$-commutator in \eqref{sec1: structure of Omega}. Then each ternary $\omega$-commutator in \eqref{sec1: structure of Omega} will be multiplied by the same scalar, that is, either all terms in \eqref{sec1: structure of Omega} will be multiplied by $\omega$ or by $\bar\omega$, depending on the cyclic permutation we choose. Factoring out this common coefficient, we obtain the sum of double commutators analogous to \eqref{sec1: structure of Omega}, and we can apply to this sum our previous arguments. That is, the ternary products can once again be grouped into ternary $\omega$-associators, and by associativity, their sum vanishes. Since we can perform two cyclic permutations, we prove the vanishing of an additional 40 permutations of the form \eqref{sec1: double ternary products}, leading to a total of 60.  

The remaining 60 permutations correspond to non-cyclic permutations of triples of elements in the inner ternary $\omega$-commutators of \eqref{sec1: structure of Omega}. It is straightforward to verify that the properties of the ternary $\omega$-commutator (where a non-cyclic permutation corresponds to the transition to a reflected ternary $\omega$-commutator) reduce this case to the one considered above. Consequently, we ultimately conclude that all ternary products of the form \eqref{sec1: double ternary products}, where $(i_1,i_2,i_3,i_4,i_5)$ is an arbitrary permutation from $S_5$, combine into ternary $\omega$-associators, and by ternary associativity, the entire expression vanishes, that is, $\Omega(a_1,a_2,a_3,a_4,a_5)=0$.
\end{proof}
%%%%%%%%%%%%%%%%%%%%%%%%%%%%%%%%%%%%%%%%%
%%%%%%%%%%%%%%%%%%%%%%%%%%%%%%%%%%%%%%%%%%
%%%%%%%%%%%%%%%%%%%%%%%%%%%%%%%%%%%%%%%%%
\section{Ternary Lie algebra at cube roots of unity}
%%%%%%%%%%%%%%%%%%%%%%%%%%%%%%%%%%%%%%%%%%%
%%%%%%%%%%%%%%%%%%%%%%%%%%%%%%%%%%%%%%%%%%
%%%%%%%%%%%%%%%%%%%%%%%%%%%%%%%%%%%%%%%%%%
In this section, we define the notion of ternary Lie algebra at cube roots of unity. At the core of a ternary Lie algebra at cube roots of unity lies a ternary Lie bracket, which satisfies the symmetries of the ternary $\omega$-commutator \eqref{omega-symmetry of commutator} and the identity stated in Theorem \ref{theorem: identity}. Theorem \ref{theorem: identity} allows us to construct a broad class of ternary Lie algebras at cube roots of unity. Indeed, given a ternary associative algebra over the field of complex numbers, we can equip it with the ternary $\omega$-commutator, thereby obtaining a ternary Lie algebra at cube roots of unity. We also give the definition of simple ternary Lie algebra at cube roots of unity. Then we introduce the structure constants of a ternary Lie algebra at cube roots of unity and derive a system of equations they must satisfy.
\begin{definition}
Let $\cal T$ be a vector space over the field of complex numbers and $\omega,\bar\omega$ be primitive third-order roots of unity. Then, $\cal T$ is said to be a {ternary Lie algebra at cube roots of unity} if $\cal T$ is endowed with a trilinear mapping $[-,-,-]:{\cal T}\times{\cal T}\times{\cal T}\to {\cal T}$ such that for any $a,b,c,f,g\in{\cal T}$ the following conditions are satisfied:
\begin{itemize}
\item
$[a,b,c]=\omega\;[b,c,a]=\overline\omega\;[c,a,b],$
\item 
$\circlearrowleft \Big(\big[[a,b,c],f,g\big]+\big[[a,f,b],g,c\big]
+\big[[a,g,f],c,b\big]+\big[[a,c,g],b,f\big]\Big) =0.$
\end{itemize}
\label{sec1: Definition of ternary Lie algebra}
\end{definition}
In order to simplify presentation of the defined structure, we will use the following terminology:  
\begin{enumerate}
\item The trilinear mapping $[-,-,-]:{\cal T}\times{\cal T}\times{\cal T}\to {\cal T}$ will be referred to as the ternary Lie bracket.  
\item The property of the ternary Lie bracket stated in the first condition of Definition \ref{sec1: Definition of ternary Lie algebra} will be called the $\omega$-symmetry of the ternary Lie bracket with respect to cyclic permutations.  
\item The identity given in the second condition of Definition \ref{sec1: Definition of ternary Lie algebra} will be referred to as the $GA(1,5)$-identity.  
\item A ternary Lie algebra at the cube roots of unity will be called a ternary $\omega$-Lie algebra.
\end{enumerate}
\begin{definition}
Let $\cal T$ be a ternary $\omega$-Lie algebra and $\cal I\subset {\cal T}$ be its subspace. Then, $\cal I$ is said to be an ideal of a ternary $\omega$-Lie algebra $\cal T$ if, for any $a\in {\cal I}$ and $x,y\in{\cal L}$, it holds $[a,x,y]\in{\cal I}.$ A ternary $\omega$-Lie algebra is said to be simple if it has no non-trivial ideals, that is, it has no ideals other than $\{0\}$ and $\cal L$.
\end{definition}
Let $\cal T$ be a ternary $\omega$-Lie algebra, where $\cal T$ is an $n$-dimensional vector space, and $e_1,e_2,\ldots,e_n$ be a basis for this vector space. We introduce the structure constants $C^m_{ikl}$ of a ternary $\omega$-Lie algebra $\cal T$ as follows:
\begin{equation}
[e_i,e_k,e_l]=C_{ikl}^m\;e_m,
\label{structure constants}
\end{equation}
where all indices take integer values from 1 to \( n \) and we use the Einstein convention of summation over repeated indices. When replacing one basis of a vector space $\cal T$ with another, the structure constants transform according to the tensor law, that is, they transform as a tensor with one contravariant and three covariant indices. This tensor has the $\omega$-symmetry with respect to the cyclic permutations of its covariant subscripts, that is,
\begin{equation}
C^m_{ikl}=\omega\;C^m_{kli}=\overline\omega\;C^m_{lik}.
\label{structure constants omega-symmetry}
\end{equation}
It is evident that the $GA(1,5)$-identity imposes additional constraints on the structure constants of $\cal T$, which can be expressed as a system of equations. To present this system in a more compact form, we will use, as before, the symbol for cyclic permutations of five elements, in this case, the indices. The five indices that undergo cyclic permutations will be underlined. Thus, we define
$$
\circlearrowleft C^m_{\underline{i}\,\underline{k}\,\underline{l}}\,C^p_{m\,\underline{r}\,\underline{s}}=C^m_{ikl}\,C^p_{mrs}+C^m_{klr}\,C^p_{msi}+C^m_{lrs}\,C^p_{mik}+C^m_{rsi}\,C^p_{mkl}+C^m_{sik}\,C^p_{mlr}.
$$
It follows from the $GA(1,5)$-identity that the structure constants of a ternary $\omega$-Lie algebra $\cal T$ satisfy the system of {equations}, as follows: %MDPI: Please confirm if the underline is unnecessary in Equations and can be removed. The following highlights are the same.
\begin{equation}
\circlearrowleft (C^m_{\underline{i}\,\underline{k}\,\underline{l}}\,C^p_{m\,\underline{r}\,\underline{s}}+C^m_{\underline{i}\,\underline{r}\,\underline{k}}\,C^p_{m\,\underline{s}\,\underline{l}}+C^m_{\underline{i}\,\underline{s}\,\underline{r}}\,C^p_{m\,\underline{l}\,\underline{k}}+C^m_{\underline{i}\,\underline{l}\,\underline{s}}\,C^p_{m\,\underline{k}\,\underline{r}})=0.
\label{identity for structure constants}
\end{equation}
%%%%%%%%%%%%%%%%%%%%%%%%
%%%%%%%%%%%%%%%%%%%%%%%%%
\section{Associative ternary algebras of rectangular and cubic matrices}
%%%%%%%%%%%%%%%%%%%%%%%%%%%
%%%%%%%%%%%%%%%%%%%%%%%%%%%%
In this section, we provide examples of associative ternary algebras. We describe a fairly general structure that enables the construction of associative ternary algebras of rectangular and cubic matrices.

First of all, we recall that associative ternary algebras are closely related to a more general algebraic structure known as a semiheap \cite{Hawthorn-Stokes,Kolar,Rybolowicz-Zapata,Wagner(1951),Wagner(1953)}. A set \(\mathfrak S \) is called a semiheap if it is equipped with a ternary multiplication that satisfies associativity of the second kind. Thus, we see that \( \mathfrak S \) forms a ternary algebra with associativity of the second kind whenever it possesses a vector space structure and the ternary multiplication is linear in each argument. The theory of semiheaps was developed by Wagner within the algebraic framework of the theory of manifolds in differential geometry. 

We now describe a general structure that enables the construction of associative ternary algebras. Let \( ({\cal R},\ast) \) be an associative (binary) algebra, and let \( \cal M \) be a right \({\cal R}\)-module. Let \( \beta:{\cal M}\times{\cal M}\to{\cal R} \) be a bilinear form on the module \( {\cal M} \). Then we can define ternary multiplication on the vector space $\cal M$ by the formula
\begin{equation}
x\diamond y\diamond z=x\cdot \beta(y,z).
\label{sec4: ternary multiplication beta}
\end{equation}
Analogously, if $\cal M$ is a left $\cal L$-module, where $(\cal L,\star)$ is an associative (binary) algebra, and $\alpha:{\cal M}\times{\cal M}\to {\cal L}$ is a bilinear form then we can define the ternary multiplication on the vector space $\cal M$ by the formula
\begin{equation}
x\odot y\odot z=\alpha(x,y)\cdot z.
\label{sec4: ternary multiplication alpha}
\end{equation}
\begin{proposition}
The ternary multiplication \eqref{sec4: ternary multiplication beta} is associative of the second kind if $\beta$ satisfies the equation
\begin{eqnarray}
\beta\big(u\cdot\beta(z,y),v\big)=\beta\big(y,z\cdot\beta(u,v)\big)=\beta(y,z)\ast\beta(u,v).
\label{sec4: condition for beta}
\end{eqnarray}
The ternary multiplication \eqref{sec4: ternary multiplication alpha} is associative of the second kind if $\alpha$ satisfies the equation
\begin{eqnarray}
&& \alpha\big(\alpha(x,y)\cdot z,u\big)=\alpha\big(x,\alpha(u,z)\cdot y\big)=\alpha(x,y)\star\alpha(z,u).
\label{sec4: condition for alpha}
\end{eqnarray}
\end{proposition}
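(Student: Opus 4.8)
The plan is to establish sufficiency by a direct expansion. In each of the two cases I would write out the three fivefold ternary products appearing in the definition \eqref{intr: associativity II} of associativity of the second kind, simplify each one using the defining formula for the ternary multiplication together with the appropriate module axiom, and then check that the three results coincide precisely because of the stated equation for $\beta$, respectively $\alpha$.

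For the multiplication $\diamond$ of \eqref{sec4: ternary multiplication beta}, I would fix $a,b,c,g,h\in\mathcal M$ and, applying \eqref{sec4: ternary multiplication beta} twice together with the right $\mathcal R$-module law $(x\cdot r)\cdot s=x\cdot(r\ast s)$, reduce the three products to
\begin{align*}
(a\diamond b\diamond c)\diamond g\diamond h &= a\cdot\bigl(\beta(b,c)\ast\beta(g,h)\bigr),\\
a\diamond(g\diamond c\diamond b)\diamond h &= a\cdot\beta\bigl(g\cdot\beta(c,b),\,h\bigr),\\
a\diamond b\diamond(c\diamond g\diamond h) &= a\cdot\beta\bigl(b,\,c\cdot\beta(g,h)\bigr).
\end{align*}
Then I would invoke the hypothesis \eqref{sec4: condition for beta} with $u=g,\ z=c,\ y=b,\ v=h$, which says exactly that the three elements of $\mathcal R$ acting on $a$ here, namely $\beta(b,c)\ast\beta(g,h)$, $\beta(g\cdot\beta(c,b),h)$ and $\beta(b,c\cdot\beta(g,h))$, coincide; hence the three ternary products agree and $\diamond$ is associative of the second kind.

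The case of $\odot$ from \eqref{sec4: ternary multiplication alpha} is the left-module mirror image. Using \eqref{sec4: ternary multiplication alpha} twice and the left $\mathcal L$-module law $\ell\cdot(\ell'\cdot x)=(\ell\star\ell')\cdot x$, the three products become
\begin{align*}
(a\odot b\odot c)\odot g\odot h &= \alpha\bigl(\alpha(a,b)\cdot c,\,g\bigr)\cdot h,\\
a\odot(g\odot c\odot b)\odot h &= \alpha\bigl(a,\,\alpha(g,c)\cdot b\bigr)\cdot h,\\
a\odot b\odot(c\odot g\odot h) &= \bigl(\alpha(a,b)\star\alpha(c,g)\bigr)\cdot h,
\end{align*}
and \eqref{sec4: condition for alpha} with $x=a,\ y=b,\ z=c,\ u=g$ forces the three elements of $\mathcal L$ multiplying $h$ to coincide, yielding associativity of the second kind for $\odot$.

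Since every step is a formal manipulation, I do not expect a genuine obstacle. The one point requiring care is the bookkeeping of arguments: moving the parentheses to the centre in \eqref{intr: associativity II} transposes the two inner factors, so the equation that $\beta$ (resp.\ $\alpha$) must satisfy has to carry this transposition, which is why \eqref{sec4: condition for beta} and \eqref{sec4: condition for alpha} are not the naive ``compatibility with $\ast$'' (resp.\ $\star$) relations. Matching the variables $u,z,y,v$ (resp.\ $x,y,z,u$) to $a,b,c,g,h$ in the order dictated by \eqref{intr: associativity II}, rather than the order in which they are listed, is the only place where an error could creep in.
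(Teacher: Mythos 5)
Your proof is correct and follows essentially the same route as the paper: expand each of the three bracketings of \eqref{intr: associativity II} using the definition of the ternary product and the module law, then identify the three resulting elements of the coefficient algebra via \eqref{sec4: condition for beta} (resp.\ \eqref{sec4: condition for alpha}). The only difference is that you write out the $\alpha$ case explicitly, which the paper leaves as ``analogous''; your variable matching in both cases is accurate.
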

\begin{proof}
We will prove second-kind associativity only for the ternary multiplication \eqref{sec4: ternary multiplication beta}, as the proof for the ternary multiplication \eqref{sec4: ternary multiplication alpha} follows analogously. We have
\begin{eqnarray}
&& (x\diamond y\diamond z)\diamond u\diamond v=(x\diamond y\diamond z)\cdot\beta(u,v)=
                                      x\cdot(\beta(y,z)\ast \beta(u,v)),\nonumber\\
&& x\!\diamond\! (u\diamond z\diamond y)\diamond v=x\!\cdot\!\beta(u\diamond z\diamond y,v)=
                                x\!\cdot\! \beta(u\!\cdot\!\beta(z,y),v)=x\!\cdot\!(\beta(y,z)\ast\beta(u,v)).\nonumber\\
&&  x\!\diamond\! y\diamond (z\diamond u\diamond v)=x\!\cdot\!\beta(y,z\diamond u\diamond v)=
                                        x\!\cdot\! \beta(y,z\cdot \beta(u,v)=x\!\cdot\!(\beta(y,z)\ast\beta(u,v)).\nonumber
\end{eqnarray}
\end{proof}
\noindent
It should be noted that if \( \cal M \) is a $({\cal L},{\cal R})$-bimodule and the bilinear forms $\alpha,\beta$ satisfy the equation $\alpha(x,y)\cdot z=x\cdot\beta(y,z)$, then the ternary products \eqref{sec4: ternary multiplication beta} and \eqref{sec4: ternary multiplication alpha} coincide.

We can use the proved statement to construct specific ternary associative algebras with second-kind associativity. First, we apply the described structure to rectangular matrices. Let \( {\mathfrak M}_{m,n} \) be the vector space of rectangular $m\times n$-matrices. This space is a right \({\mathfrak A}_n\)-module, where \( {\mathfrak A}_n \) is the algebra of square matrices of order \( n \). We define a bilinear form $\beta:{\mathfrak M}_{m,n}\times {\mathfrak M}_{m,n}\to {\mathfrak A}_n$ by \( \beta(A,B)=A^T\,B \), where $A,B$ are rectangular $m\times n$-matrices, $A^T$ stands for transposed matrix and $A^T\,B$ is a product of two matrices. It is straightforward to verify that \( \beta \) satisfies equation \eqref{sec4: condition for beta}. Consequently, the ternary multiplication on $(m,n)$-matrices $A\cdot B\cdot C=A\,B^T\,C$ is second-kind associative, and we obtain a ternary algebra with second-kind associativity, whose elements are $(m,n)$-matrices. If we consider the vector space \( {\mathfrak M}_{m,n} \) as a left ${\mathfrak A}_m$-module over the algebra ${\mathfrak A}_m$ of square matrices of order \( m \) and define the bilinear form by $\alpha(A,B)=A\,B^T$, we will evidently arrive at the same ternary algebra with second-kind associativity. If we now consider the ternary $\omega$-commutator on a ternary associative (second-kind) algebra of rectangular matrices, we obtain a ternary $\omega$-Lie algebra.

Let us consider a special case of the ternary $\omega$-Lie algebra $\mathfrak M_{m,n}$ constructed by means of rectangular $(m\times n)$-matrices, when $m=1$. Thus, the elements of the ternary $\omega$-Lie algebra are matrices consisting of a single row. These matrices can be identified with \( n \)-dimensional complex vectors. Consequently, the vector space of the ternary $\omega$-Lie algebra is an \( n \)-dimensional complex space $\mathbb C^n$. In this particular case we have $\alpha(x,y)=x\,y^T$, where $x,y\in{\mathbb C}^n$. In this case, the form $\alpha$ is symmetric, which simplifies the expression for the ternary $\omega$-commutator. Indeed, we have
\begin{eqnarray}
&& \alpha(x,y)\, z + \omega\,\alpha(y,z)\, x+\bar\omega\,\alpha(z,x)\, y+\alpha(z,y)\, x+\bar\omega\,\alpha(y,x)\, z+\omega\,\alpha(x,z)\, y\nonumber\\
        &&\qquad\qquad=   -\omega\,(\alpha(x,y)\, z+\omega\,\alpha(y,z)\, x+\bar\omega\,\alpha(z,x)\,y).
\end{eqnarray}
Thus, by discarding the irrelevant factor $-1$, we can define the ternary $\omega$-commutator in this particular case with a more concise expression, which is a linear combination of cyclic permutations, that is,
\begin{eqnarray}
&& [x,y,z]=\alpha(z,x)\, y+\omega\,\alpha(x,y)\,z+\bar\omega\,\alpha(y,z)\,x\nonumber\\
    && \qquad\quad\;\; =z\,x^T\,y+\omega\;x\,y^T\,z+\overline\omega\;y\,z^T\,x.\nonumber
\label{ternary commutator for vectors}
\end{eqnarray}
In this particular case, we can easily compute the structure constants of the ternary $\omega$-Lie algebra. Indeed let $e_1,e_2,\ldots,e_n$ be the canonical basis for $\mathbb C^n$, that is, the $i$th coordinate of a vector $e_i$ is 1, all other coordinates are equal to zero. Then, the structure constants of this ternary $\omega$-Lie algebra are as follows:
\begin{equation}
C^m_{ijk}=\delta_{ki}\,\delta^m_j+\omega\;\delta_{ij}\;\delta^m_k+\overline\omega\;\delta_{jk}\,\delta^m_i.
\end{equation}
If we calculate the structure constants (\ref{ternary commutator for vectors}) for the simplest case of two-dimensional complex vector space $\mathbb C^2$, then we obtain the following ternary commutation relations:
\begin{equation}
[e_1,e_2,e_1]=e_2,\;\;[e_2,e_1,e_2]=e_1.
\label{sec4: commutation relations 1}
\end{equation}
We denote the two-dimensional ternary $\omega$-Lie algebra with commutation relations (\ref{sec4: commutation relations 1}) by ${\mathfrak M}_{1,2}$.

We now proceed to the construction of associative ternary algebras using cubic matrices \cite{Abramov-Kerner-Liivapuu-Shitov}. A cubic matrix of order \( n \) is understood as a complex-valued entity with three indices ${\tt X}=({\tt X}_{ijk})$, each ranging over integer values from 1 to \( n \). Such matrices are also referred to as cubic or three-dimensional matrices. Let us denote the vector space of cubic matrices of order $n$ by ${\cal C}_n$. Then the vector space of cubic matrices of order $n$ is a right ${\mathfrak A}_n$-module, where ${\mathfrak A}_n$ is the algebra of square matrices of order $n$, if we define the right action of the algebra of $n$th order square matrices on the vector space of $n$th order cubic matrices ${\cal C}_n\times {\mathfrak A}_n\to {\cal C}_n$ by
\begin{equation}
({\tt X}\cdot A)_{ijl}={\tt X}_{ijk}\,A_{kl}.
\end{equation}
To construct a second-kind associative ternary algebra of cubic matrices, we need a ${\mathfrak A}_n$-valued bilinear form defined on the space of cubic matrices ${\cal C}_n$. Moreover, this bilinear form must satisfy equation \eqref{sec4: condition for beta}. We consider two such forms, \( \beta \) and \( \gamma \), which we define as follows
\begin{equation}
\beta({\tt X},{\tt Y})_{pk}={\tt X}_{rsp}\,{\tt Y}_{srk},\;\;\;
                \gamma({\tt X},{\tt Y})_{pk}={\tt X}_{rsp}\,{\tt Y}_{rsk}     
\end{equation}
where ${\tt X},{\tt Y}$ are $n$th order cubic matrices. We will show that the bilinear form \( \beta \) satisfies equation \eqref{sec4: condition for beta}. The proof for the form \( \gamma \) follows analogously. We have
\begin{eqnarray}
&& \beta\big({\tt U}\cdot\beta({\tt Z},{\tt Y}),{\tt V})\big)_{pk}=\big({\tt U}\cdot\beta({\tt Z},{\tt Y})\big)_{rsp}{\tt V}_{srk}=
                  {\tt U}_{rsm}\,\beta({\tt Z},{\tt Y})_{mp}\,{\tt V}_{srk}\nonumber\\
                                    &&\qquad\qquad\qquad\qquad\qquad\qquad\qquad\qquad\quad\;={\tt U}_{rsm}\,{\tt Z}_{ijm}\,{\tt Y}_{jip}\,{\tt V}_{srk}\nonumber \\
&& \beta\big({\tt Y},{\tt Z}\cdot\beta({\tt U},{\tt V})\big)_{pk}={\tt Y}_{jip}\cdot\big({\tt Z}\cdot\beta({\tt U},{\tt V})\big)_{ijk}=
                  {\tt Y}_{jip}\,{\tt Z}_{ijm}\,\beta({\tt U},{\tt V})_{mk}\nonumber\\
                                    &&\qquad\qquad\qquad\qquad\qquad\qquad\qquad\qquad\quad\;={\tt Y}_{jip}\,{\tt Z}_{ijm}\,{\tt U}_{rsm}\,{\tt V}_{srk}\nonumber \\
&& \big(\beta({\tt Y},{\tt Z})\,\beta({\tt U},{\tt V})\big)_{pk}=\beta({\tt Y},{\tt Z})_{pm}\,\beta({\tt U},{\tt V})_{mk}=
                                {\tt Y}_{jip}{\tt Z}_{ijm}{\tt U}_{rsm}{\tt V}_{srk}.\nonumber
\end{eqnarray}
Thus, we have a second-kind associative ternary algebra of cubic matrices of order \( n \) with the ternary multiplication ${\tt X}\cdot {\tt Y}\cdot {\tt Z}={\tt X}\cdot\beta({\tt Y},{\tt Z})$. Let us denote this ternary associative algebra by $({\cal C}_n,\beta).$ Similarly we can construct the ternary associative algebra $({\cal C}_n,\gamma)$.

It should be noted that the bilinear form \( \beta \) can be expressed using the trace of a product of square matrices. Given a cubic matrix ${\tt X}=({\tt X}_{ijk})$, we can fix the value of one of its three indices, for example, the third index \(k\), by assigning it an integer value \(p\), where \(p\) is an integer from 1 to \(n\). We obtain the quantity with two indices $i,j$, which can be considered as a square matrix of $n$th order. Let us denote this $n$th order square matrix by ${\tt X}_{(p)}$, that is, ${\tt X}_{(p)}=({\tt X}_{ijp})$. Then $\beta({\tt X},{\tt Y})$ and $\gamma({\tt X},{\tt Y})$ are $n$th order square matrices and $(p,k)$-entry of these matrices can be expressed as follows
$$
\beta({\tt X},{\tt Y})_{pk}=\mbox{Tr}({\tt X}_{(p)}{\tt Y}_{(k)}),\;\;
                  \gamma({\tt X},{\tt Y})_{pk}=<{\tt X}_{(p)}{\tt Y}_{(k)}>,
$$
where $<A,B>=A_{rs}B_{rs}$ for two square matrices of $n$th order.

Thus, we have constructed a broad class of ternary $\omega$-Lie algebras using cubic matrices. For cubic matrices of order \( n \), there exist two distinct ternary $\omega$-Lie algebras. One of them is based on the ternary associative algebra $({\cal C}_n,\beta)$, while the other is based on the ternary associative algebra $({\cal C}_n,\gamma)$. We shall denote these ternary $\omega$-Lie algebras by the same symbols as the underlying ternary associative algebras.

Let us consider low-dimensional ternary $\omega$-Lie algebras. In the case \( n = 2 \), we have a vector space of cubic matrices of order two, which has dimension 8. By choosing as the ternary associative multiplication of these matrices the operation constructed using a bilinear form of trace type, namely the bilinear form \( \beta \) and equipping the vector space of cubic matrices of second order with the ternary $\omega$-commutator, we obtain an eight-dimensional ternary $\omega$-Lie algebra.  

Within this algebra, we can identify a subalgebra using the concept of the trace of a cubic matrix with respect to a pair of indices. Given a cubic matrix \( {\tt X}_{ijk} \) of order \( n \), we can define three traces by selecting pairs of indices from $i,j,k$ \cite{Sokolov}. Specifically, the trace over the first two indices is given by sum $\sum_i {\tt X}_{iij}$, the trace over the first and last indices is given by sum $\sum_i {\tt X}_{iji}$, and the trace over the last two indices is given by sum $\sum_i {\tt X}_{jii}$.  

A cubic matrix of order \( n \) is said to be traceless if all three of its traces vanish. Applying this definition to cubic matrices of order two, it is straightforward to verify that the subspace of traceless cubic matrices of order two is a two-dimensional subspace in ${\cal C}_2$. We denote this two-dimensional subspace by ${\mathfrak T}_2$. Moreover, it can be shown that the ternary $\omega$-commutator of three traceless cubic matrices results in a traceless matrix. Consequently, the traceless cubic matrices of order two form a subalgebra within the eight-dimensional ternary $\omega$-Lie algebra  of all cubic matrices of order two ${\cal C}_2$.

Let us choose the entries ${\tt X}_{111}$ and ${\tt X}_{222}$ of the traceless cubic matrix of order two ${\tt X}$ as the independent parameters. Then, the remaining entries can be expressed in terms of these two parameters as follows:
$$
{\tt X}_{221}={\tt X}_{212}={\tt X}_{122}=-{\tt X}_{111},\;\;{\tt X}_{112}={\tt X}_{121}={\tt X}_{211}=-{\tt X}_{222}.
$$
We arrange the entries of a cubic matrix of the second-order ${\tt X}$ in space, that is, in the vertices of the cube, as follows:
\begin{center}
\begin{tikzpicture}
  % First matrix
  \matrix (m1) [matrix of math nodes, row sep=1.5em, column sep=1.5em]{
    & {\tt X}_{112} & & {\tt X}_{122} \\
    {\tt X}_{111} & & {\tt X}_{121} & \\
    & {\tt X}_{212} & & {\tt X}_{222} \\
    {\tt X}_{211} & & {\tt X}_{221} & \\};

  % Paths for the first matrix
  \path[-]
    (m1-1-2) edge (m1-1-4)
            edge (m1-2-1)
            edge [densely dotted] (m1-3-2)
    (m1-1-4) edge (m1-3-4)
            edge (m1-2-3)
    (m1-2-1) edge [-,line width=6pt,draw=white] (m1-2-3)
            edge (m1-2-3)
            edge (m1-4-1)
    (m1-3-2) edge [densely dotted] (m1-3-4)
            edge [densely dotted] (m1-4-1)
    (m1-4-1) edge (m1-4-3)
    (m1-3-4) edge (m1-4-3)
    (m1-2-3) edge [-,line width=6pt,draw=white] (m1-4-3)
            edge (m1-4-3);
 \end{tikzpicture}
  \end{center}
Thus, as generators of the ternary $\omega$-Lie algebra of cubic traceless matrices of the second order ${\mathfrak T}_2$, we can take two cubic matrices ${\tt F}_1=-\frac{i}{2\sqrt{2}}\,{\tt E}_1,{\tt F}_2= -\frac{i}{2\sqrt{2}}\,{\tt E}_2$, where
\begin{tikzpicture}
  % First matrix
  \matrix (m1) [matrix of math nodes, row sep=1.5em, column sep=1.5em]{
    & 0 & & -1 \\
    1 & & 0 & \\
    & -1 & & 0 \\
    0 & & -1 & \\};
   \node[left=0.1em of m1] {\({\tt E}_1=\)};
  % Paths for the first matrix
  \path[-]
    (m1-1-2) edge (m1-1-4)
            edge (m1-2-1)
            edge [densely dotted] (m1-3-2)
    (m1-1-4) edge (m1-3-4)
            edge (m1-2-3)
    (m1-2-1) edge [-,line width=6pt,draw=white] (m1-2-3)
            edge (m1-2-3)
            edge (m1-4-1)
    (m1-3-2) edge [densely dotted] (m1-3-4)
            edge [densely dotted] (m1-4-1)
    (m1-4-1) edge (m1-4-3)
    (m1-3-4) edge (m1-4-3)
    (m1-2-3) edge [-,line width=6pt,draw=white] (m1-4-3)
            edge (m1-4-3);

  % Second matrix positioned to the right of the first matrix
  \matrix (m2) [matrix of math nodes, row sep=1.5em, column sep=1.5em, right=5em of m1]{
    & -1 & & 0 \\
    0 & & -1 & \\
    & 0 & & 1 \\
    -1 & & 0 & \\};
  \node[left=0.1em of m2] {\({\tt E}_2=\)};
  % Paths for the second matrix
  \path[-]
    (m2-1-2) edge (m2-1-4)
            edge (m2-2-1)
            edge [densely dotted] (m2-3-2)
    (m2-1-4) edge (m2-3-4)
            edge (m2-2-3)
    (m2-2-1) edge [-,line width=6pt,draw=white] (m2-2-3)
            edge (m2-2-3)
            edge (m2-4-1)
    (m2-3-2) edge [densely dotted] (m2-3-4)
            edge [densely dotted] (m2-4-1)
    (m2-4-1) edge (m2-4-3)
    (m2-3-4) edge (m2-4-3)
    (m2-2-3) edge [-,line width=6pt,draw=white] (m2-4-3)
            edge (m2-4-3);
\end{tikzpicture}

\noindent
Since we use $\beta$-ternary associative multiplication of cubic matrices, the ternary $\omega$-commutator of three cubic matrices ${\tt X},{\tt Y},{\tt Z}$ is the cubic matrix, whose entries can be calculated as follows
\begin{eqnarray}
[{\tt X},{\tt Y},{\tt Z}]_{ijk}&=& {\tt X}_{ijl}{\tt Y}_{nml}{\tt Z}_{mnk}+\omega\;{\tt Y}_{ijl}{\tt Z}_{nml}{\tt X}_{mnk}+
      \overline\omega\;{\tt Z}_{ijl}{\tt X}_{nml}{\tt Y}_{mnk}\nonumber\\
      &&\quad +{\tt Z}_{ijl}{\tt Y}_{nml}{\tt X}_{mnk}+
      \overline\omega\;{\tt Y}_{ijl}{\tt X}_{nml}{\tt Z}_{mnk}+\omega\;{\tt X}_{ijl}{\tt Z}_{nml}{\tt Y}_{mnk},\nonumber
\end{eqnarray}
Making use of the previous formula we find the ternary commutation relations of the ternary $\omega$-Lie algebra ${\mathfrak T}_2$
\begin{equation}
[{\tt F}_1,{\tt F}_2,{\tt F}_1]={\tt F}_2,\;\;[{\tt F}_2,{\tt F}_1,{\tt F}_2]={\tt F}_1.
\label{sec4: commutation relations 2}
\end{equation}
By comparing the obtained commutation relations \eqref{sec4: commutation relations 2} with the commutation relations \eqref{sec4: commutation relations 1}, we see that the two 2-dimensional ternary $\omega$-Lie algebras we constructed, ${\mathfrak M}_{1,2}$ and ${\mathfrak T}_2$, are isomorphic.
%%%%%%%%%%%%%%%%%%%%%%%%%%%%
\section{Classification of 2-dimensional ternary Lie algebras at cube roots of unity}
%%%%%%%%%%%%%%%%%%%%%%%%%%%%%
%%%%%%%%%%%%%%%%%%%%%%%%%%%
%%%%%%%%%%%%%%%%%%%%%%%%%%%%%%%%%%%
%%%%%%%%%%%%%%%%%%%%%%%%%%%%%%%%%%%
%%%%%%%%%%%%%%%%%%%%%%%%%%%%%%%%%%
In the previous section, we constructed two 2-dimensional ternary $\omega$-Lie algebras and demonstrated that they are isomorphic. The first ternary $\omega$-Lie algebra ${\mathfrak M}_{1,2}$ was constructed using vectors of the 2-dimensional complex vector space $\mathbb C^2$, while the second ${\mathfrak T}_2$ was constructed by means of traceless cubic matrices of order two. In this section, we provide a complete classification of 2-dimensional ternary $\omega$-Lie algebras. We prove a theorem that asserts that, up to isomorphism, there exist four distinct 2-dimensional ternary $\omega$-Lie algebras. The formulation of the theorem specifies the ternary commutation relations for these algebras.
\begin{theorem}
If $\cal F$ is a two-dimensional ternary $\omega$-Lie algebra, then it is isomorphic to one of the four two-dimensional ternary $\omega$-Lie algebras given by their structure constants in the \mbox{following table:}
\begin{table}[H]
%\caption{This is a table caption. Tables should be placed in the main text near to the first time they are~cited.\label{tab1}}
%\newcolumntype{C}{>{\centering\arraybackslash}X}
\begin{tabularx}{\linewidth}{|X|X|X|X|X|}
\toprule
\textbf{{Number}} %MDPI: 1. Please confirm if the bold is unnecessary and can be removed. The following highlights are the same; 2. We removed the empty row in the table body, please confirm this revision; 3. Please confirm if need to number this table and add caption.
	& \textbf{$C^1_{121}$}	& \textbf{$C^2_{121}$} & \textbf{$C^1_{212}$}  & \textbf{$C^2_{212}$}\\
\midrule
{{I}}                  & 0		                          & 0        &     0              &     0             \\

{{II}}	                & 0			                       & 1        &     1              &     0              \\

{{III}}	            & 0		                       & 1        &   0                &     0              \\

{{IV}}	            & 1		                       & 0        &   0                &     $-$1              \\
\bottomrule
\end{tabularx}
%\noindent{\footnotesize{\textsuperscript{1} Abelian algebra}}
%\noindent{\footnotesize{\textsuperscript{2} ${\cal L}_2$}}
\end{table}
\label{theorem classification}
\end{theorem}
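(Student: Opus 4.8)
The plan is to reduce the classification to a finite polynomial system in four scalar parameters, solve it, and then pass to orbits under the change-of-basis action of $GL({\cal F})\cong GL(2,\mathbb C)$.

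\textbf{Parameters.} Fix a basis $e_1,e_2$ of ${\cal F}$. The $\omega$-symmetry \eqref{structure constants omega-symmetry} forces $C^m_{111}=C^m_{222}=0$ (a scalar equal to $\omega$ times itself must vanish) and expresses every other structure constant through $C^m_{121}$ or $C^m_{212}$, e.g. $C^m_{112}=\omega\,C^m_{121}$, $C^m_{211}=\bar\omega\,C^m_{121}$ and likewise on the cyclic orbit of $(2,1,2)$. Hence a ternary $\omega$-Lie structure on ${\cal F}$ is completely determined by the quadruple $(a,b,c,d)=(C^1_{121},C^2_{121},C^1_{212},C^2_{212})\in\mathbb C^4$ — precisely the data in the four columns of the table — and, conversely, every quadruple defines via \eqref{structure constants} a trilinear bracket satisfying the first axiom of Definition \ref{sec1: Definition of ternary Lie algebra}. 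So the only remaining condition is the $GA(1,5)$-identity.

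\textbf{From the $GA(1,5)$-identity to a quadratic system.} By multilinearity, the identity $\Omega=0$ of Theorem \ref{theorem: identity} holds on ${\cal F}$ iff $\Omega(e_{j_1},\dots,e_{j_5})=0$ for all $(j_1,\dots,j_5)\in\{1,2\}^5$. Since $GA(1,5)\le S_5$ is a group, the polynomial $\Omega$ is invariant under permuting its five arguments by elements of $GA(1,5)$; and since $GA(1,5)$ is sharply $2$-transitive on five points (being the affine group of the line over $\mathbb F_5$), it is transitive on $k$-subsets for every $k$, so the orbits of $GA(1,5)$ on $\{1,2\}^5$ are classified by the number of indices equal to $1$. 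The all-equal cases ($0$ or $5$ ones) are automatic since the inner bracket vanishes when its arguments coincide; moreover the relabelling $e_1\leftrightarrow e_2$, i.e. $(a,b,c,d)\mapsto(d,c,b,a)$, is a symmetry of the whole construction, so the ``four ones'' case is the relabel of the ``one one'' case and ``three ones'' the relabel of ``two ones''. It therefore suffices to expand the two instances $\Omega(e_1,e_2,e_2,e_2,e_2)=0$ and $\Omega(e_1,e_1,e_2,e_2,e_2)=0$ (and their relabels) of \eqref{sec1: structure of Omega} by applying \eqref{intr: ternary commutator} twice — equivalently, to write out \eqref{identity for structure constants} in dimension two. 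Each $\Omega(e_{j_1},\dots,e_{j_5})$ is a sum of twenty double brackets, each a vector in ${\cal F}$ whose coordinates are degree-two monomials in $a,b,c,d$ times powers of $\omega$; collecting coefficients of $e_1$ and $e_2$ produces the explicit quadratic system cutting out the affine variety $V\subset\mathbb C^4$ of all ternary $\omega$-Lie structures on ${\cal F}$. This bookkeeping is finite and elementary and forms the computational core of the argument.

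\textbf{Orbits under change of basis.} The group $GL({\cal F})\cong GL(2,\mathbb C)$ acts on $V$ (it acts on the bracket, hence on $(a,b,c,d)$), and two quadruples give isomorphic algebras exactly when they lie in the same orbit; so the theorem amounts to showing that $V/GL(2,\mathbb C)$ has four points, with representatives I–IV. The plan is to stratify $V$ by the $GL(2,\mathbb C)$-invariant integer $\dim[{\cal F},{\cal F},{\cal F}]$: the stratum $\dim=0$ is case I; in the stratum $\dim=1$ one takes $e_2$ spanning $[{\cal F},{\cal F},{\cal F}]$ and $e_1$ complementary, and shows the residual freedom normalizes the bracket to case III; in the stratum $\dim=2$ one uses the equations of the previous step to show a basis exists with either $[e_1,e_2,e_1]=e_2,\ [e_2,e_1,e_2]=e_1$ (case II, the algebra ${\mathfrak M}_{1,2}$ with relations \eqref{sec4: commutation relations 1}) or $[e_1,e_2,e_1]=e_1,\ [e_2,e_1,e_2]=-e_2$ (case IV). One also checks directly that these four quadruples satisfy the quadratic system, so that they are genuine ternary $\omega$-Lie algebras. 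The main obstacle I expect here is this orbit analysis: one must verify that the quadratic system has no components beyond those accounted for, and that $GL(2,\mathbb C)$ acts on each nontrivial stratum with exactly the claimed number of orbits, which requires a careful (if routine) case analysis on the rank and internal structure of the bracket.

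\textbf{Pairwise non-isomorphism.} Finally, the four algebras are separated by isomorphism invariants. The dimension of $[{\cal F},{\cal F},{\cal F}]$ is $0,1,2,2$ for I, III, II, IV respectively, which isolates I and III. To distinguish II from IV, consider the operators $T_x\colon y\mapsto[x,y,x]$ and the quadratic form $Q(x)=\mbox{tr}\,T_x$ on ${\cal F}$; an isomorphism $\phi$ satisfies $T'_{\phi(x)}=\phi\,T_x\,\phi^{-1}$, hence preserves $Q$. A short computation shows $Q\equiv 0$ on IV (each $T_x$ has zero trace and zero determinant, hence is nilpotent) whereas $Q\not\equiv 0$ on II (for instance $\mbox{tr}\,T_{e_1}=1$). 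Therefore I, II, III, IV are pairwise non-isomorphic, and together with the surjectivity statement of the previous step this proves the theorem.
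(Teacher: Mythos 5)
Your overall architecture---reduce to the four constants $(C^1_{121},C^2_{121},C^1_{212},C^2_{212})$, impose the $GA(1,5)$-identity, then classify $GL_2(\mathbb C)$-orbits and separate them by invariants---is the same as the paper's, and your non-isomorphism invariant $Q(x)=\mbox{tr}\,T_x$ is a correct and rather elegant substitute for the paper's case separation: one computes $\mbox{tr}\,T_{\alpha e_1+\beta e_2}=C^2_{121}\alpha^2-(C^1_{121}+C^2_{212})\alpha\beta+C^1_{212}\beta^2$, whose rank together with $\dim[{\cal F},{\cal F},{\cal F}]$ does distinguish I--IV pairwise. However, the two computational cores are left as plans, and one of them fails as stated. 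First, you never expand the identity: the paper does, and finds that in dimension two it imposes \emph{no} conditions whatsoever---every instance of \eqref{identity for structure constants} groups into multiples of $(C^m_{ikl}+C^m_{kli}+C^m_{lik})$, which vanish by the cyclic $\omega$-symmetry alone. So your variety $V$ is all of $\mathbb C^4$, and ``the equations of the previous step,'' on which your analysis of the $\dim=2$ stratum is supposed to lean, give you nothing.

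Second, and more seriously, stratifying by $\dim[{\cal F},{\cal F},{\cal F}]$ is too coarse to carry the orbit count. The explicit transformation law \eqref{transformation of structure constants} shows that $C^1_{121}-C^2_{212}$ transforms by the character $(\det A)^{-1}$, so the hyperplane ${\cal W}=\{C^1_{121}=C^2_{212}\}$ is $GL_2(\mathbb C)$-invariant; it contains the representatives I, II, III but not IV. Since $V=\mathbb C^4$, the quadruple $(1,0,0,0)$, i.e.\ the bracket with $[e_1,e_2,e_1]=e_1$ and $[e_2,e_1,e_2]=0$, is a legitimate two-dimensional ternary $\omega$-Lie algebra whose derived algebra is one-dimensional; yet it lies outside $\cal W$ and therefore can never be carried to case III by a change of basis, nor to IV (whose derived algebra is two-dimensional). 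Hence your step ``$\dim[{\cal F},{\cal F},{\cal F}]=1$ normalizes to case III'' is false, and the proposal breaks exactly at the point you yourself flagged as the main obstacle. Be aware that this is genuinely delicate: the paper's own argument analyses only vectors lying in $\cal W$ or in the complementary invariant line $\cal V$ spanned by $(1,0,0,-1)$, so vectors with both components nonzero (such as $(1,0,0,0)$, or $(1,b,1,-1)$, for which the ratio of $\det Q$ to $(C^1_{121}-C^2_{212})^2$ is an absolute invariant) are not disposed of there either. Any complete proof must confront this semi-invariant head-on rather than rely on the dimension of the derived algebra.
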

\begin{proof}
Let $\cal F$ be a two-dimensional ternary $\omega$-Lie algebra and $e_1,e_2$ be generators for this algebra. Due to the properties of a ternary $\omega$-Lie algebra, a ternary bracket of this algebra containing three equal arguments is equal to zero. Hence, we have the following:
\begin{equation}
[e_1,e_1,e_1]=[e_2,e_2,e_2]=0.
\label{111,222 is zero}
\end{equation}
Moreover, due to the $\omega$-symmetries of a ternary bracket of a ternary $\omega$-Lie algebra, we have the following:
\begin{eqnarray}
&& [e_1,e_2,e_1] = \omega\,[e_2,e_1,e_1]=\overline{\omega}[e_1,e_1,e_2],\nonumber\\
&& [e_2,e_1,e_2] = \omega\,[e_1,e_2,e_2]=\overline{\omega}[e_2,e_2,e_1].\nonumber
\end{eqnarray}
Thus, we have two independent and possibly non-trivial ternary brackets, that is, $[e_1,e_2,e_1]$, $[e_2,e_1,e_2]$, which completely determine the structure of a two-dimensional ternary $\omega$-Lie algebra. We can expand these two ternary brackets via the structure constants \mbox{as follows:}
\begin{equation}
[e_1,e_2,e_1]=C_{121}^1\,e_1+C_{121}^2\,e_2,\qquad
      [e_2,e_1,e_2]=C_{212}^1\,e_1+C_{212}^2\,e_2.
\end{equation}
Thus, the structure of a two-dimensional ternary $\omega$-Lie algebra $\cal F$ is completely determined by four independent structure constants $C^1_{121},C^2_{121},C^1_{212},C^2_{212}$. First of all, the structure constants of a two-dimensional $\omega$-Lie algebra have the \mbox{following symmetries:}
\begin{eqnarray}
C^1_{121}=\omega\;C^1_{211}=\overline\omega\;C^1_{112},\quad C^2_{121}=\omega\;C^2_{211}=\overline\omega\;C^2_{112},\label{symmetries of 121}\\
C^1_{212}=\omega\;C^1_{122}=\overline\omega\;C^1_{221},\quad C^2_{212}=\omega\;C^2_{122}=\overline\omega\;C^2_{221}.\label{symmetries of 212}
\end{eqnarray}
From (\ref{symmetries of 121}) and (\ref{symmetries of 212}), we have the following:
\begin{equation}
C^m_{121}+C^m_{211}+C^m_{112}=0, \quad C^m_{212}+C^m_{221}+C^m_{122}=0,\qquad m=1,2.\label{sum is zero}
\end{equation}
It follows from the $GA(1,5)$- identity that the structure constants of $\cal F$ must satisfy the following equations:
\begin{equation}
\circlearrowleft (C^m_{\underline{i}\,\underline{k}\,\underline{l}}\,C^p_{m\,\underline{r}\,\underline{s}}+C^m_{\underline{i}\,\underline{r}\,\underline{k}}\,C^p_{m\,\underline{s}\,\underline{l}}+C^m_{\underline{i}\,\underline{s}\,\underline{r}}\,C^p_{m\,\underline{l}\,\underline{k}}+C^m_{\underline{i}\,\underline{l}\,\underline{s}}\,C^p_{m\,\underline{k}\,\underline{r}})=0.
\label{identity for structure constants 2}
\end{equation}
We claim that, in the case of a two-dimensional ternary $\omega$-Lie algebra $\cal F$, the $GA(1,5)$- identity does not impose additional conditions on the structure constants of $\cal F$, that is, \mbox{Equation (\ref{identity for structure constants 2})} is satisfied for any integers $(i,k,l,r,s)$, independently running the values $1,2$, by virtue of the properties (\ref{sum is zero}) of the structure constants.

Indeed, the number of ordered sequences of integers $(i,k,l,r,s)$, where each integer can be either 1 or 2, is $2^5=32$. We can discard two of these sequences $(1,1,1,1,1), (2,2,2,2,2)$, since due to property (\ref{111,222 is zero}), the left-hand side of Equation (\ref{identity for structure constants}) in this case is equal to zero. Since each sequence of integers $(i,k,l,r,s)$ determines an Equation (\ref{identity for structure constants}), formally, we have 30 equations, but not all of these equations are distinct. We will go through all possible sequences of integers $(i,k,l,r,s)$ using the following scheme. First, we will consider all sequences containing four 1s and one 2 (there will be 5 of these), then three 1s and two 2s (there will be 10 of these), then two 1s and three 2s (there will be 10 of these), and finally one 1 and four 2s (there will be 5 of these). We start with the sequence $(1,2,1,1,1)$. Through straightforward calculations, we identify that this sequence leads to the following:
\begin{equation}
C^m_{121}\,C^p_{m11}+C^m_{211}\,C^p_{m11}+C^m_{111}\,C^p_{m12}+C^m_{111}\,C^p_{m21}+C^m_{112}\,C^p_{m11}=0.
\label{equation 1211}
\end{equation}
First, note that this equation contains all ordered sequences of integers $(i,k,l,r,s)$ containing four 1s and one 2. This means that any such sequence yields Equation (\ref{equation 1211}). Second, the left-hand side of this equation is identically zero, which is easy to see if we take into account that $C^m_{111}=0$ and write the left-hand side of (\ref{equation 1211}) in the following form:
$$
(C^m_{121}+C^m_{211}+C^m_{112})\,C^p_{m11}.
$$
Because of property (\ref{sum is zero}), this expression vanishes and we conclude that (\ref{equation 1211}) is an identity and does not impose additional constraints on the structure constants.

Next, we consider the sequence $(1,2,1,1,2)$. This sequence leads to the following equation:
\begin{eqnarray}
&& C^m_{121}\,C^p_{m12}+ C^m_{211}\,C^p_{m21}+C^m_{112}\,C^p_{m12}+C^m_{121}\,C^p_{m21}+C^m_{212}\,C^p_{m11}\nonumber\\
   &&\;\;\;  +C^m_{112}\,C^p_{m21}+C^m_{122}\,C^p_{m11}+C^m_{221}\,C^p_{m11}+C^m_{211}\,C^p_{m12}+C^m_{111}\,C^p_{m22}=0.
   \label{equation 12112}
\end{eqnarray}
As in the previous case, the left-hand side of the above equation contains all ordered sequences of integers containing three 1s and two 2s (there are ten of such sequences). Since we perform cyclic permutations on the five subscripts, this means that by taking any sequence and substituting it into (\ref{identity for structure constants}), we obtain Equation (\ref{equation 12112}). Since $C^m_{111}=0$, the last term on the left-hand side of Equation (\ref{equation 12112}) is zero, and by collecting the remaining nine terms, i.e.,
$$
(C^m_{121}+C^m_{211}+C^m_{112})\,(C^p_{m12}+C^p_{m21})+(C^m_{221}+C^m_{212}+C^m_{122})\,C^p_{m11}
$$
we see that by property (\ref{sum is zero}), the left-hand side of Equation (\ref{equation 12112}) is identically zero. We still have two sets of sequences left, that is, the set of sequences containing two 1s and three 2s, and the set of sequences containing one 1 and four 2s.
By choosing sequences $(1,2,1,2,2)$ and $(2,1,2,2,2)$ as representatives of these two sets and substituting them into (\ref{identity for structure constants}), we obtain the following equations:
\begin{eqnarray}
&& C^m_{121}\,C^p_{m22}+C^m_{212}\,C^p_{m21}+C^m_{122}\,C^p_{m12}+C^m_{221}\,C^p_{m21}+C^m_{212}\,C^p_{m12}\nonumber\\
   &&\qquad\quad\quad\;\;  +C^m_{122}\,C^p_{m21}+C^m_{222}\,C^p_{m11}+C^m_{221}\,C^p_{m12}+C^m_{211}\,C^p_{m22}+C^m_{112}\,C^p_{m22}=0,\nonumber\\
&& C^m_{212}\,C^p_{m22}+C^m_{122}\,C^p_{m22}+C^m_{222}\,C^p_{m21}+C^m_{222}\,C^p_{m12}+C^m_{221}\,C^p_{m22}=0,\nonumber
\end{eqnarray}
respectively. The analysis of these two equations is similar to that given above and shows that we do not obtain any additional equation on the structure constants.

Now, we will study how the structure constants $C^m_{ikl}$ behave when we pass to another basis of two-dimensional space. Obviously, the structure constants change, transforming as a (1,3)-tensor, but the structure of a ternary $\omega$-Lie algebra $\cal F$ remains the same. Let $e_1^\prime,e_2^\prime$ be another basis of generators for the two-dimensional ternary $\omega$-Lie algebra $\cal F$, where we have the following:
\begin{equation}
e_1=\alpha^1_1\,e_1^\prime+\alpha^2_1\,e_2^\prime,\qquad
      e_2=\alpha^1_2\,e_1^\prime+\alpha^2_2\,e_2^\prime.
\end{equation}
Let us denote a transition matrix by $A$ as follows:
\begin{equation}
A=\left(
  \begin{array}{cc}
    \alpha^1_1 & \alpha^1_2 \\
    \alpha^2_1 & \alpha^2_2 \\
  \end{array}
\right).
\end{equation}
Obviously, $A$ is a regular matrix, that is, $\mbox{Det}\,A\neq 0$. Thus, $A$ belongs to the group of regular second-order complex matrices, that is,  $A\in \mbox{GL}_2(\mathbb C)$. Let us denote the structure constants of $\cal F$ in the basis $e_1^\prime, e_2^\prime$ by $C^{\prime\,\,m}_{\,ikl}$, that is,
$$
[e_1^\prime,e_2^\prime,e_1^\prime]={C}^{\prime\,\,1}_{\,121}\,e_1^\prime+{C}^{\prime\,\,2}_{\,121}\,e_2^\prime,\;\;\;
    [e_2^\prime,e_1^\prime,e_2^\prime]={C}^{\prime\,\,1}_{\,212}\,e_1^\prime+{C}^{\prime\,\,2}_{\,212}\,e_2^\prime.
$$
Through straightforward calculations, we find the following:
\begin{equation}
\left(
  \begin{array}{c}
    {C}^{\prime\,\,1}_{\,121} \\
    \\
    {C}^{\prime\,\,2}_{\,121} \\
    \\
    {C}^{\prime\,\,1}_{\,212}\\
    \\
    {C}^{\prime\,\,2}_{\,212}\\
  \end{array}
\right)=\frac{1}{(\mbox{Det}\,A)^2}\left(
  \begin{array}{ccccccc}
    \alpha^1_1\,\alpha^2_2 & \;\;\;&\alpha^1_2\,\alpha^2_2 &\;\;\; &\alpha^1_1\,\alpha^2_1 & \;\;\;&\alpha^1_2\,\alpha^2_1 \\
    \\
    \alpha^2_1\,\alpha^2_2 & & (\alpha^2_2)^2 & & (\alpha^2_1)^2 & & \alpha^2_1\,\alpha^2_2 \\
    \\
    \alpha^1_1\,\alpha^1_2 & & (\alpha^1_2)^2 & & (\alpha^1_1)^2 & & \alpha^1_1\,\alpha^1_2 \\
    \\
    \alpha^1_2\,\alpha^2_1 & &\alpha^1_2\,\alpha^2_2 & &\alpha^1_1\,\alpha^2_1 & &\alpha^1_1\,\alpha^2_2 \\
  \end{array}
\right)\;\left(
  \begin{array}{c}
  {C}^1_{121} \\
  \\
  {C}^2_{121} \\
  \\
  {C}^1_{212} \\
  \\
  {C}^2_{212} \\
  \end{array}
\right).
\label{transformation of structure constants}
\end{equation}
Obviously, Formula (\ref{transformation of structure constants}) defines the tensor representation of the Lie group $\mbox{GL}_2(\mathbb C)$ in the space of (1,3)-tensors with symmetries (\ref{symmetries of 121}) and (\ref{symmetries of 212}).

We will consider the structure constants, ordered as follows $(C^1_{121},C^2_{121},C^1_{212},C^2_{212})$, as vectors of the four-dimensional complex vector space $\mathbb C^4$. We will exclude the trivial case where all structure constants are zero. In this case, we will refer to the corresponding ternary $\omega$-Lie algebra as Abelian. In the four-dimensional complex vector space of structure constants, vectors satisfying the condition $C^1_{121}=C^2_{212}$ form the three-dimensional subspace, which will be denoted by $\cal W$. Thus, we have the following:
$$
{\cal W}=\{(C^1_{121},C^2_{121},C^1_{212},C^2_{212})\in {\mathbb C}^4: C^1_{121}=C^2_{212}\}.
$$
As follows, from (\ref{transformation of structure constants}), the subspace $\cal W$ is invariant with respect to transformations (\ref{transformation of structure constants}). Let us denote by $\cal V$ the one-dimensional subspace of $\mathbb C^4$ spanned by the vector $(1,0,0,-1)$. Then, ${\mathbb C}^4={\cal W}\oplus{\cal V}$. It follows from (\ref{transformation of structure constants}) that the vector $(1,0,0,-1)$ is an eigenvector of all transformations (\ref{transformation of structure constants}), i.e., $\cal V$ is an invariant one-dimensional subspace. Thus, the tensor representation (\ref{transformation of structure constants}) is reducible and the one-dimensional subspace $\cal V$ defines the two-dimensional ternary $\omega$-Lie algebra of our classification. The non-trivial commutation relations of this algebra will be written as follows:
\begin{equation}
[e_1,e_2,e_2]=e_1,\;\;[e_2,e_1,e_2]=-e_2.
\label{algebra 1,0,0,-1}
\end{equation}

Now, we study the structure of the three-dimensional subspace $\cal W$. To simplify the presentation, we introduce the following notations:
$$
a={C}^{\prime\,\,1}_{\,121},\;\;b={C}^{\prime\,\,2}_{\,121},\;\;c={C}^{\prime\,\,1}_{\,212},\;\;x=\alpha^1_1,\;\;y=\alpha^1_2,\;\;z=\alpha^2_1,\;\;u=\alpha^2_2.
$$
Since $A\in \mbox{GL}_2(\mathbb C)$, it holds $x\,u-y\,z\neq 0$.
First, we note that the vector $(0,1,1,0)$ of structure constants of the two-dimensional ternary $\omega$-Lie algebra ${\cal L}_2$ belongs to the subspace $\cal W$. Thus, by applying to this vector all possible transformations (\ref{transformation of structure constants}), we obtain the set of vectors lying in the subspace $\cal W$ that define the same algebra ${\cal L}_2$. Therefore, a vector $(a,b,c,a)\in {\cal W}$ defines the algebra ${\cal L}_2$ if the system of equations, i.e.,
\begin{eqnarray}
\frac{x\,z+y\,u}{(x\,u-y\,z)^2} &=& a,\nonumber\\
\frac{z^2+u^2}{(x\,u-y\,z)^2} &=& b,\label{system of equations 1}\\
\frac{x^2+y^2}{(x\,u-y\,z)^2} &=& c,\nonumber
\end{eqnarray}
obtained from (\ref{transformation of structure constants}), has at least one solution. We mean that $x,y,z,u$ are unknown (elements of a basis transformation matrix $A$), and $a,b,c$ are given numbers (structure constants ${C}^{\prime\,\,1}_{\,121},{C}^{\prime\,\,2}_{\,121},{C}^{\prime\,\,1}_{\,212}$). We will prove that if the condition $a^2=b\,c$ is satisfied, then system (\ref{system of equations 1}) has no solutions, and, consequently, the vector $(0,1,1,0)$ cannot be transformed by transformation (\ref{transformation of structure constants}) into the vector $(a,b,c,a)$. Thus, the vectors $(a,b,c,a)$ satisfying the condition $a^2=b\,c$ determine two-dimensional ternary $\omega$-Lie algebras (in fact, one algebra) that are not isomorphic to ${\cal L}_2$.

The system of Equation (\ref{system of equations 1}) can be written in matrix form, as follows:
\begin{equation}
\frac{1}{(\mbox{Det}\,A)^2}\;A\,A^{T}=\left(
  \begin{array}{cc}
    c & a \\
    a & b \\
  \end{array}
\right), \;\;\;\;A=\left(
  \begin{array}{cc}
    x & y \\
    z & u \\
  \end{array}
\right).
\end{equation}
By calculating the determinants of both sides of this matrix equation, we obtain the following:
$$
\frac{1}{(\mbox{Det}\,A)^2}=\mbox{Det}\;\left(
  \begin{array}{cc}
    c & a \\
    a & b \\
  \end{array}
\right).
$$
Thus, assuming that $a^2=b\,c$ and system (\ref{system of equations 1}) has a solution, we arrive at a contradiction since the left-hand side of the above equality cannot be zero ($\mbox{Det}\,A\neq 0$), while due to the condition $a^2=b\,c$ the right-hand side is zero. A vector of the subspace $\cal W$ satisfying $a^2=b\,c$ can be written as $(\pm\sqrt{bc},b,c,\pm\sqrt{bc})$. We can show that the vector $(0,1,0,0)\in\cal W$ can be transformed by (\ref{transformation of structure constants}) to any vector of the form $(\pm\sqrt{bc},b,c,\pm\sqrt{bc})$. Indeed, if $b\neq 0$, then the transformation matrix, i.e.,
$$
A=\left(
  \begin{array}{cc}
    \frac{1}{\sqrt{b}} & {\frac{\sqrt{c}}{\sqrt{b}}} \\
    0 & 1 \\
  \end{array}
\right)
$$
induces the transformation (\ref{transformation of structure constants}) that sends the vector $(0,1,0,0)$ to the vector $(\sqrt{bc},b,c,\sqrt{bc})$. If $b=0$, but $c\neq 0$, then the transformation matrix
$$
A=\left(
  \begin{array}{cc}
    0 & 1 \\
    \frac{1}{\sqrt{c}} & 0 \\
  \end{array}
\right),
$$
induces the transformation (\ref{transformation of structure constants}) that sends the vector $(0,1,0,0)$ to $(0,0,c,0)$. From this, we conclude that the set of vectors of type $(\pm\sqrt{bc},b,c,\pm\sqrt{bc})$ gives one more two-dimensional ternary $\omega$-Lie algebra that is not isomorphic to either algebra ${\cal L}_2$ or algebra (\ref{algebra 1,0,0,-1}). The commutation relations of this algebra can be written as follows:
\begin{equation}
[e_1,e_2,e_1]=e_2,\;\;\;[e_2,e_1,e_2]=0.
\label{algebra 0,1,0,0}
\end{equation}
The last possibility we must consider involves the vectors $(a,b,c,a)$ of the subspace $\cal W$ that satisfy condition $a^2\neq b\,c$. In a similar way to what we did above, we can prove that for any vector $(a,b,c,a)\in{\cal W}$ that satisfies the condition $a^2\neq b\,c$, the system of Equation (\ref{system of equations 1}) has solutions. This means that each vector determines the two-dimensional ternary $\omega$-Lie algebra ${\mathfrak M}_{1,2}$.
\end{proof}
In the classification table of Theorem \ref{theorem classification}, the two-dimensional ternary $\omega$-Lie algebra labeled $I$ is Abelian, and the two-dimensional ternary $\omega$-Lie algebra labeled {$II$} is ${\mathfrak M}_{1,2}$. Recall that the two-dimensional ternary $\omega$-Lie algebra ${\mathfrak M}_{1,2}$ was constructed in two ways. The first way uses vectors of the complex plane, and the second way uses traceless cubic matrices of the second order. It is easy to verify that the two-dimensional ternary $\omega$-Lie algebras labeled {$II$} (${\cal L}_2$) and {$IV$} are simple ternary $\omega$-Lie algebras and algebra {$III$} possesses the non-trivial ideal spanned by the second generator $e_2$.
%%%%%%%%%%%%%%%%%%%%%%%%%%%%%%%%%%%%%%%%%%
%%%%%%%%%%%%%%%%%%%%%%%%%%%%%%%%%%%%%%%%%%
%%%%%%%%%%%%%%%%%%%%%%%%%%%%%%%%%%%%%%%%%%
%%%%%%%%%%%%%%%%%%%%%%%%%%%
%\input{references}
%\biblstarthook{

\end{document}